\def\titlerunning#1{\gdef\titrun{#1}}
\def\author#1{\gdef\autrun{\def\and{\unskip, }#1}\gdef\@author{#1}}
\def\address#1{{\def\and{\\\hspace*{18pt}}\renewcommand{\thefootnote}{}%
\footnote {#1}}%
\markboth{\autrun}{\titrun}}
\def\email#1{\hspace*{4pt}{\em e-mail}: #1}
\newtheorem{thm}{Theorem}[section]
\newtheorem{cor}[thm]{Corollary}
\theoremstyle{definition}
\newtheorem{rem}[thm]{Remark}
\newtheorem{defi}[thm]{Definition}
\newtheorem{con}[thm]{Conjecture}
\def\GF{{\rm GF}}
\def\Cay{{\rm Cay}}
\begin{document}

\titlerunning{}
\title{Divisible design Cayley digraphs}

\author{Dean Crnkovi\' c, Hadi Kharaghani and Andrea \v Svob}

\maketitle

\address{D. Crnkovi\'{c}, A. \v Svob: Department of Mathematics, University of Rijeka, Radmile Matej\v{c}i\'c 2, 51000 Rijeka, Croatia;
\email{\{deanc,asvob\}@math.uniri.hr}
\and
H. Kharaghani: Department of Mathematics and Computer Science, University of Lethbridge, Lethbridge, Alberta, T1K 3M4, Canada;
\email{kharaghani@uleth.ca} 
}

\begin{abstract}
Divisible design digraphs which can be obtained as Cayley digraphs are studied. A characterization of divisible design Cayley digraphs in terms of the generating sets is given. Further, 
we give several constructions of divisible design Cayley digraphs and classify divisible design Cayley digraphs on $v \le 27$ vertices.
\end{abstract}

\bigskip

{\bf 2010 Mathematics Subject Classification:} 05B30, 05C20, 05E18.

{\bf Keywords:} divisible design, digraph, Cayley digraph, regular group action.

\section{Introduction}\label{intro}

We assume that the reader is familiar with the basic facts of group theory,
graph theory and design theory. We refer the reader to \cite{bjl, atlas, diestel, r} on terms not defined in this paper.

\bigskip

A graph $\Gamma$ can be interpreted as a design by taking the vertices of
$\Gamma$ as points, and the neighbourhoods of the vertices as blocks.
Such a design is called a neighbourhood design of $\Gamma$. The adjacency
matrix of $\Gamma$ is the incidence matrix of its neighbourhood design.

A $k$-regular graph on $v$ vertices with the property that any two distinct
vertices have exactly $\lambda$ common neighbours is called a $(v,k, \lambda)$-graph
(see \cite{rudvalis}).
The neighbourhood design of a $(v,k, \lambda)$-graph is a symmetric $(v,k, \lambda)$
design. Haemers, Kharaghani and Meulenberg have defined divisible design graphs
(DDGs for short) as a generalization of $(v,k, \lambda)$-graphs (see \cite{ddg}).

Divisible design digraphs, a directed graph version of divisible design graphs, were introduced in \cite{ddd}.

A directed graph (or digraph) is a pair $D=(V,E)$, where $V$ is a finite
nonempty set of vertices and $A$ is a set of ordered pairs (arcs) $(x,y)$
with $x,y \in V$ and $x \neq y$. A digraph $D$ is asymmetric if 
$(x,y) \in A$ implies $(y,x) \notin A$. If $(x,y)$ is an arc, we will say that $x$ dominates $y$ or that $y$ is dominated by $x$. A digraph $D$ is called regular of degree $k$ if each vertex of 
$\Gamma$ dominates exactly $k$ vertices and is dominated by exactly $k$ vertices.

We call a digraph $D$ on $v$ vertices doubly regular with parameters 
$(v,k, \lambda)$ if it is regular of degree $k$ and, for any distinct 
vertices $x$ and $y$, the number of vertices $z$ that dominates both 
$x$ and $y$ is equal to $\lambda$ and the number of vertices 
$z$ that are dominated by both $x$ and $y$ is equal to $\lambda$.

A digraph $D=(V,A)$ on $v$ vertices $V= \{ x_1, \ldots ,x_v\}$
may be characterized by its adjacency matrix, an 
$v \times v$ $( 0,1 )$-matrix $A=[a_{ij}]$ defined by 

$$a_{ij}=1 \qquad {\rm if \ and\ only \ if} \qquad (x_i,x_j) \in A.$$

The adjacency matrix of a doubly regular digraph is an incidence matrix of a 
symmetric design.

\begin{defi}
Let $D$ be a regular asymmetric digraph of degree $k$ on $v$ vertices.
$\Gamma$ is called a divisible design digraph (DDD for short) 
with parameters $(v,k, \lambda_1, \lambda_2, m,n)$ if the vertex set 
can be partitioned into $m$ classes of size $n$, such that
for any two distinct vertices $x$ and $y$ from the same class, 
the number of vertices $z$ that dominates or being dominated by both 
$x$ and $y$ is equal to $\lambda_1$,
and for any two distinct vertices $x$ and $y$ from different classes, 
the number of vertices $z$ that dominates or being dominated by both 
$x$ and $y$ is equal to $\lambda_2$.
\end{defi}

Divisible design digraphs are natural generalization of doubly 
regular asymmetric digraphs. Note that the adjacency matrix of a DDD with
$m=1$, $n=1$, or $\lambda_1 = \lambda_2$ is the incidence matrix of a symmetric design. In this case we call the DDD improper, otherwise it is proper.

An incidence structure with $v$ points and the constant block size $k$ is a 
(group) divisible design with parameters $(v,k, \lambda_1, \lambda_2, m,n)$ 
whenever the point set can be partitioned into $m$ classes of size $n$, 
such that two points from the same class are incident with exactly $\lambda_1$ common blocks, and two points from different classes are 
incident with exactly $\lambda_2$ common blocks. A divisible design $D$ is said to be symmetric (or to have the dual property) if the dual of $D$ is a 
divisible design with the same parameters as $D$. The definition of
a DDD yields the following theorem.

\begin{thm}
If $\Gamma$ is a divisible design digraph with parameters 
$(v,k, \lambda_1, \lambda_2, m,n)$ then its 
neighbourhood design is a symmetric divisible design 
$(v,k, \lambda_1, \lambda_2, m,n)$.
\end{thm}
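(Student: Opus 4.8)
The plan is to convert the combinatorial definition of a DDD into a single matrix identity for the adjacency matrix $A$, and then read off the two halves of the assertion (that the neighbourhood design is group divisible, and that it is self-dual) from the two products $AA^{T}$ and $A^{T}A$. Write $P=I_{m}\otimes J_{n}$ for the $v\times v$ $(0,1)$-matrix whose $(x,y)$-entry is $1$ precisely when $x$ and $y$ lie in the same class. For distinct vertices $x,y$ the number of $z$ dominating both equals $(A^{T}A)_{xy}$ and the number of $z$ dominated by both equals $(AA^{T})_{xy}$, while regularity of degree $k$ gives $(AA^{T})_{xx}=(A^{T}A)_{xx}=k$. The defining property of a DDD says that each of these two counts equals $\lambda_{1}$ on same-class pairs and $\lambda_{2}$ on different-class pairs, which is exactly the identity
\[
AA^{T}=A^{T}A=kI+\lambda_{1}(P-I)+\lambda_{2}(J-P).
\]
First I would record this identity carefully, as it stores all the information in the definition.

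Second, I would pin down the neighbourhood design explicitly: its points are the vertices, the block indexed by $x$ is the in-neighbourhood of $x$, and its incidence matrix is $A$, so every block has size $k$ and every point lies in exactly $k$ blocks. The number of blocks containing two points $x,y$ is $(AA^{T})_{xy}$, so the displayed identity shows at once that two points in a common class lie in $\lambda_{1}$ common blocks and two points in different classes lie in $\lambda_{2}$ common blocks. Together with the given partition of the points into $m$ classes of size $n$, this is exactly the statement that the neighbourhood design is a group divisible design with parameters $(v,k,\lambda_{1},\lambda_{2},m,n)$. For the dual property I would use the companion identity $A^{T}A=kI+\lambda_{1}(P-I)+\lambda_{2}(J-P)$: indexing the blocks by the same vertex set, so that the block partition is inherited from the point partition, the $(x,y)$-entry of $A^{T}A$ counts the points common to the two blocks $B_{x},B_{y}$, and the identity says that two blocks from the same class meet in $\lambda_{1}$ points while two blocks from different classes meet in $\lambda_{2}$ points. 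Hence the dual is a group divisible design with the same parameters, and the neighbourhood design is symmetric, as claimed.

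The one point requiring care—and the step I would flag as the crux—is the equality $AA^{T}=A^{T}A$. This is not incidental: it is precisely the content of the DDD axiom that the in-count and the out-count agree class by class, and it is what forces the primal and dual concurrence matrices to coincide, hence the resulting divisible design to be symmetric with identical parameters. As a sanity check I would specialise to $\lambda_{1}=\lambda_{2}=\lambda$, where the identity collapses to $AA^{T}=A^{T}A=(k-\lambda)I+\lambda J$, recovering the stated fact that an improper DDD yields a symmetric $(v,k,\lambda)$ design; this simultaneously confirms the normalisation of the parameters and the correct reading of the definition. Everything else is routine bookkeeping with $I$, $J$ and $P$.
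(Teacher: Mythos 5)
Correct, and essentially the paper's own (tacit) argument: the paper gives no proof beyond the remark that the definition yields the theorem, and your identity $AA^{T}=A^{T}A=kI+\lambda_{1}(P-I)+\lambda_{2}(J-P)$ is exactly that immediate translation, with one Gram matrix giving the point concurrences of the neighbourhood design and the other the block intersections of its dual. Your reading of the definition---that the number of common dominators and the number of common dominated vertices each \emph{separately} equal $\lambda_{1}$ or $\lambda_{2}$ according to the class structure---is the reading consistent with the rest of the paper (e.g.\ the $(8,3,0,1,4,2)$ digraph from $Q_{8}$, where the combined count on cross-class pairs would be $2$, not $\lambda_{2}=1$), and it is precisely what makes the dual carry the same parameters.
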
 

We say that a $( 0,1 )$-matrix $X$ is skew if $X+X^t$ is a $( 0,1 )$-matrix.
Thus the adjacency matrix of a DDD is skew.
If $D$ is a symmetric divisible design $(v,k, \lambda_1, \lambda_2, m,n)$ 
that has a skew incidence matrix, then $D$ is the neighbourhood design of a 
divisible design digraph with parameters $(v,k, \lambda_1, \lambda_2, m,n)$.

\bigskip

Let $G$ be a group and $S$  a subset of $G$ not containing the identity element of the group, which will be denoted by $e$. The vertices of the Cayley digraph $\Cay(G,S)$ are the elements 
of the group $G$, and its arcs are all the couples $(g,gs)$ with $g \in G$ and $s \in S$. Goryainov, Kabanov, and Shalaginov \cite{kabanov} studied divisible design
Cayley graphs. In this paper we study divisible design Cayley digraphs. We present some constructions for divisible design Cayley digraphs and give a classification of such digraphs
on $v \le 27$ vertices. The classification leads to the proof of existence of some divisible design digraphs with certain parameters which were previously undecided (see \cite{ddd}).

\section{Divisible design Cayley digraphs}\label{CDDDs}

The following well-known theorem (see \cite{sabidussi, ming-yao}) provides a characterization of Cayley digraphs.

\begin{thm}\label{regular}
A digraph $D=(V,A)$ is a Cayley digraph of a group if and only 
if $Aut(D)$ contains a regular subgroup. 
\end{thm}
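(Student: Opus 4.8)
The plan is to prove both implications directly, translating between the combinatorial notion of a Cayley digraph and the group-theoretic notion of a regular subgroup of automorphisms. Recall that a subgroup $G \le Aut(D)$ acts \emph{regularly} on $V$ if for every pair of vertices $u, w$ there is exactly one $g \in G$ with $g(u) = w$; equivalently the action is transitive with trivial point stabilizers.

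For the forward direction, suppose $D = \Cay(G,S)$, so $V = G$ and the arcs are the pairs $(g, gs)$ with $s \in S$. I would exhibit the left regular representation as the required subgroup: for each $h \in G$ let $\rho_h \colon G \to G$ be the map $g \mapsto hg$. Each $\rho_h$ is a bijection of $V$, and it preserves arcs because $\rho_h$ sends $(g, gs)$ to $(hg, hgs) = (hg, (hg)s)$, which is again an arc since $s \in S$. Hence $\{\rho_h : h \in G\} \le Aut(D)$, and this subgroup is isomorphic to $G$ and acts regularly on $V$ (the unique element carrying $g$ to $g'$ is $\rho_{g'g^{-1}}$). This establishes that $Aut(D)$ contains a regular subgroup.

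For the converse, assume $G \le Aut(D)$ acts regularly on $V$. I would fix a base vertex $v_0 \in V$ and use regularity to set up the bijection $\varphi \colon G \to V$, $g \mapsto g(v_0)$, which is well defined and bijective precisely because the action is regular. Using $\varphi$ to identify each vertex $g(v_0)$ with the group element $g$, I would then define the connection set $S = \{\, s \in G : (v_0, s(v_0)) \in A \,\}$; note $e \notin S$ since $D$ has no loops. The remaining task is to check that under this identification the arcs of $D$ coincide with those of $\Cay(G,S)$. The key computation is that since each $g \in G$ is an automorphism, one has $(g(v_0), h(v_0)) \in A$ if and only if $(v_0, (g^{-1}h)(v_0)) \in A$, i.e.\ if and only if $g^{-1}h \in S$. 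Writing $s = g^{-1}h$, this says the arc from $g(v_0)$ to $h(v_0)$ exists exactly when $h = gs$ with $s \in S$, which is precisely the defining condition for arcs of $\Cay(G,S)$. Hence $D \cong \Cay(G,S)$.

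I expect the main obstacle to be the bookkeeping in the converse direction: one must be careful that the chosen identification is consistent with the left/right multiplication convention used in the definition of $\Cay(G,S)$ (here arcs are $(g,gs)$, so neighbours are read off by right multiplication while the automorphisms arise from the left action), and that the verification of the arc condition uses the automorphism property in the correct form $(g^{-1}h)(v_0)$. Once the conventions are aligned, both directions reduce to the short verifications above.
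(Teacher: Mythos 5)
Your proposal is correct, and both directions are verified soundly: the forward direction via the left regular representation $\{\rho_h : h \in G\}$, and the converse via the bijection $g \mapsto g(v_0)$ with connection set $S = \{s \in G : (v_0, s(v_0)) \in A\}$, including the necessary observations that $e \notin S$ (no loops) and that the arc condition transports correctly through the automorphism $g^{-1}$. Note that the paper itself gives no proof of this theorem --- it is stated as well known with citations to Sabidussi and Xu --- and your argument is precisely the standard one from that literature, so there is nothing to reconcile; your care about the left-action/right-multiplication convention is exactly the point where careless write-ups go wrong, and you have it right.
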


The following characterization of divisible design Cayley graphs is given in \cite{kabanov}.
Note that a Deza graph with parameters $(n,k,b,a)$ is a $k$-regular graph with $n$ vertices in which any two vertices have $a$ or $b$ $(a \le b)$ common neighbours. 

\begin{thm} \label{cosets-DDG}
Let $\Cay(G,S)$ be a Deza graph with parameters $(v,k,b,a)$ and $SS^{-1}=aA+bB+k{e}$, where $A$, $B$ and $\{e\}$ be a partition of G. If either $A \cup \{e\}$ or $B \cup \{e\}$ is a subgroup of $G$, 
then $\Cay(G,S)$ is a DDG and the right cosets of this subgroup give a canonical partition of this graph.  
Conversely, if $\Cay(G,S)$ is a DDG, then the class of its canonical partition which contains the identity of G is a subgroup of G and classes of the canonical partition of DDG
coincides with the cosets of this subgroup.
\end{thm}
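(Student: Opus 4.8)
The plan is to work in the integral group ring $\mathbb{Z}G$ and translate the Deza-graph/DDG conditions into statements about coefficients. Write $\underline{S}=\sum_{s\in S}s$; since $\Cay(G,S)$ is an undirected graph we have $S=S^{-1}$. I adopt the adjacency convention under which the right translations $\rho_x\colon g\mapsto gx$ are automorphisms of $\Cay(G,S)$ (this is what makes the canonical classes come out as \emph{right} cosets, matching the statement). The first thing I record is the standard fact that the number of common neighbours of two distinct vertices $g,h$ equals the coefficient of $gh^{-1}$ in $\underline{S}\,\underline{S}^{-1}$; indeed a common neighbour $z$ lies in $Sg\cap Sh$, and counting such $z$ amounts to counting pairs $(s_1,s_2)\in S\times S$ with $s_2^{-1}s_1=hg^{-1}$, which is that coefficient (using $S=S^{-1}$ and the inversion symmetry of $\underline{S}\,\underline{S}^{-1}$). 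Under this dictionary the hypothesis $SS^{-1}=aA+bB+k\,e$ reads: distinct $g,h$ have exactly $a$ common neighbours when $gh^{-1}\in A$ and exactly $b$ when $gh^{-1}\in B$, while the coefficient $k$ of $e$ merely restates $k$-regularity.

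For the forward implication, suppose $H:=A\cup\{e\}$ is a subgroup (the case $B\cup\{e\}$ is symmetric, interchanging the roles of $a$ and $b$). Partition $G$ into the $[G:H]$ right cosets of $H$; each has size $|H|$, so this yields $m=[G:H]$ classes of size $n=|H|$. If $g,h$ lie in the same coset then $gh^{-1}\in H$, and since they are distinct $gh^{-1}\in A$, so by the dictionary they have $a$ common neighbours; if they lie in different cosets then $gh^{-1}\notin H$, hence $gh^{-1}\in B$, giving $b$ common neighbours. Thus the coset partition witnesses the DDG property with $(\lambda_1,\lambda_2)=(a,b)$, so $\Cay(G,S)$ is a divisible design graph whose canonical partition consists of the right cosets of $H$. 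This direction is essentially bookkeeping once the group-ring dictionary is in place.

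For the converse, assume $\Cay(G,S)$ is a (proper) DDG, so $\lambda_1\neq\lambda_2$. The crucial observation is that the canonical partition is then intrinsic to the graph: two distinct vertices lie in the same class if and only if they have $\lambda_1$ common neighbours, because within a class the count is always $\lambda_1$ and across classes always $\lambda_2\neq\lambda_1$. Consequently the partition is preserved by every automorphism of $\Cay(G,S)$, and in particular by each right translation $\rho_x$, which therefore permutes the classes. Let $H$ be the class containing $e$. For $x\in H$ the class $\rho_x(H)=Hx$ contains $\rho_x(e)=x\in H$, so $Hx$ is the class through $x$, which is $H$; hence $Hx=H$ for all $x\in H$, i.e.\ $HH\subseteq H$. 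A nonempty finite subset of $G$ containing $e$ and closed under multiplication is a subgroup, so $H$ is a subgroup. Finally, for arbitrary $x\in G$ the image $\rho_x(H)=Hx$ is again a class, so every right coset of $H$ is a class; since the classes and the cosets both partition $G$, they coincide.

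The main obstacle is the converse, and within it the single step that makes everything run: establishing that the canonical partition is \emph{forced} by the common-neighbour function (which is exactly where $\lambda_1\neq\lambda_2$ is needed) and is hence automorphism-invariant. Once that is secured, the regular subgroup supplied by Theorem~\ref{regular} does the rest, and the subgroup/coset conclusion follows from the elementary closure argument above. I would also fix the multiplication convention carefully at the outset, since the left/right choice is precisely what decides whether the canonical classes are the right or the left cosets of $H$.
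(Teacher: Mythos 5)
Your proof is correct, but note that the paper itself contains no proof of this theorem---it is quoted verbatim from \cite{kabanov} (the paper even omits the proof of its digraph analogue, Theorem~\ref{cosets-DDD}, by referring to that source)---and your argument follows the same standard route as the cited proof: the group-ring dictionary identifying the number of common neighbours of $g,h$ with the coefficient of $gh^{-1}$ in $SS^{-1}$ for the forward direction, and, for the converse, the observation that when $\lambda_1\neq\lambda_2$ the canonical partition is intrinsic (hence automorphism-invariant), so the right translations force the class of $e$ to be closed under multiplication and the remaining classes to be its right cosets. Your explicit restriction of the converse to proper DDGs is the correct reading of the statement, since for an improper DDG the canonical partition is not unique and the converse fails as literally written; likewise your care with the left/right convention (neighbours $Sg$ versus $gS$) is exactly what decides between right and left cosets and is consistent throughout your argument.
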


Using similar arguments one can prove the following theorem.

\begin{thm} \label{cosets-DDD}
Let $\Cay(G,S)$ be a digraph, $|S|=k$, 
$S \cap S^{-1} = \emptyset$, and $SS^{-1}=aA+bB+k{e}$, where $A$, $B$ and $\{e\}$ be a partition of G. If either $A \cup \{e\}$ or $B \cup \{e\}$ is a subgroup of $G$, 
then $\Cay(G,S)$ is a DDD and the right cosets of this subgroup give a canonical partition of this digraph.  
Conversely, if $\Cay(G,S)$ is a DDD, then the class of its canonical partition which contains the identity of G is a subgroup of G and classes of the canonical partition of DDD
coincides with the cosets of this subgroup.
\end{thm}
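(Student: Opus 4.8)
The plan is to pass to the group ring $\mathbb{Z}[G]$, identifying a subset $T\subseteq G$ with $\underline{T}=\sum_{t\in T}t$, and to read the combinatorial counts off the coefficients of $SS^{-1}$ and $S^{-1}S$. Writing $M$ for the adjacency matrix of $\Cay(G,S)$, we have $M_{g,h}=1$ iff $g^{-1}h\in S$, so $\Cay(G,S)$ is automatically $k$-regular (both in- and out-degree equal $|S|=k$), and the hypothesis $S\cap S^{-1}=\emptyset$ is precisely asymmetry, so $M$ is skew. The out-neighbourhood of $g$ is $gS$ and the in-neighbourhood is $gS^{-1}$; hence for distinct $g,h$ the number of common out-neighbours is $|gS\cap hS|=|S\cap(g^{-1}h)S|$, which equals the coefficient of $g^{-1}h$ in $SS^{-1}$, while the number of common in-neighbours equals the coefficient of $g^{-1}h$ in $S^{-1}S$. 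By the definition of a DDD the quantity to be controlled is the sum of these, i.e. $(MM^t+M^tM)_{g,h}$, equal to the coefficient of $g^{-1}h$ in $SS^{-1}+S^{-1}S$.

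For the direct implication, suppose $H:=A\cup\{e\}$ is a subgroup (the case $B\cup\{e\}$ is symmetric, interchanging the roles of $\lambda_1$ and $\lambda_2$). Then ``$g^{-1}h\in H$'' is an equivalence relation whose classes are the cosets of $H$, and these are the $m=[G:H]$ classes of size $n=|H|$. Since $H$ is a subgroup, the elements $e,\underline{H},\underline{G}$ span a three-dimensional subalgebra $\mathcal{A}$ of $\mathbb{Z}[G]$ (using $\underline{H}^2=n\underline{H}$, $\underline{H}\,\underline{G}=\underline{G}\,\underline{H}=n\underline{G}$ and $\underline{G}^2=v\underline{G}$), and the hypothesis $SS^{-1}=aA+bB+ke$ says exactly that $SS^{-1}\in\mathcal{A}$; equivalently, the common out-neighbour count is $a$ on same-coset pairs and $b$ on different-coset pairs. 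It then remains to show that the common in-neighbour count is likewise constant on each relation, i.e. that $S^{-1}S\in\mathcal{A}$. Granting this, say $S^{-1}S=a'A+b'B+ke$, we get $MM^t+M^tM=2kI+(a+a')R_1+(b+b')R_2$, where $R_1,R_2$ are the same-coset and different-coset relation matrices; this is the DDD condition with $\lambda_1=a+a'$, $\lambda_2=b+b'$ and canonical partition the cosets of $H$.

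The main obstacle is exactly this last point. In the undirected Deza/DDG situation of Theorem \ref{cosets-DDG} one has $S=S^{-1}$, so $SS^{-1}=S^{-1}S$ and the in- and out-neighbour counts coincide for free; here $S\cap S^{-1}=\emptyset$, and membership $SS^{-1}\in\mathcal{A}$ does not transfer to $S^{-1}S$ by any formal manipulation. I would attack it through the reverse digraph $\Cay(G,S^{-1})$, whose adjacency matrix is $M^t$ and whose governing element is $S^{-1}(S^{-1})^{-1}=S^{-1}S$: it carries the \emph{same} subgroup $H$ and the \emph{same} coset partition, so the task is to see that the defining membership is inherited by $S^{-1}S$. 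Concretely I would try a double count of the pairs contributing to the coefficient of a fixed $x$, organised by the cosets $Hg$ in which the elements of $S$ fall, using that the subgroup axioms (closure and inverses) make the distribution of $S$ over the cosets of $H$ symmetric enough to force the coefficient of $x$ in $S^{-1}S$ to depend only on whether $x\in H\setminus\{e\}$ or $x\notin H$. This is the step requiring genuine work beyond citing the graph case, and is where I expect the difficulty to concentrate.

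For the converse, let $\Cay(G,S)$ be a proper DDD with canonical partition $\mathcal{P}$. The left translations $L_g:x\mapsto gx$ are automorphisms, since $(L_g x)^{-1}(L_g y)=x^{-1}y$, so $G$ acts regularly on the vertices by automorphisms. As the DDD is proper, $\lambda_1\neq\lambda_2$, and two distinct vertices lie in the same class of $\mathcal{P}$ if and only if their in/out-neighbour count equals $\lambda_1$; this relation is preserved by every automorphism, hence $\mathcal{P}$ is invariant under all $L_g$. Let $H$ be the class of $\mathcal{P}$ containing $e$. For $g\in H$ the automorphism $L_{g^{-1}}$ carries the same-class pair $e,g$ to the pair $g^{-1},e$, which must then lie in one class, namely $H$; thus $g^{-1}\in H$, and $L_{g^{-1}}$ maps the class $H$ of $g$ onto the class of $e$, giving $g^{-1}H=H$. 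Letting $g$ range over $H$ shows $H$ is closed under inverses and products, so $H\le G$; and since $L_g$ sends the class $H$ of $e$ to the class of $g$, the classes of $\mathcal{P}$ are exactly the cosets $gH$, completing the identification of the canonical partition with the cosets of $H$.
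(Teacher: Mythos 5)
Your converse argument is correct and is essentially the standard one: left translations are automorphisms, properness ($\lambda_1\neq\lambda_2$) makes the canonical partition invariant under every automorphism, and then translating the class of $e$ shows it is a subgroup whose cosets are the classes. The problem is the forward direction, where you have correctly located the decisive step but not carried it out. From $SS^{-1}=aA+bB+ke$ with $H=A\cup\{e\}\leq G$ you still must show that $S^{-1}S$ has the same two-valued structure over the cosets of $H$: for distinct $g,h$ the number of common out-neighbours is the coefficient of $g^{-1}h$ in $SS^{-1}$, the number of common in-neighbours is the coefficient of $g^{-1}h$ in $S^{-1}S$, and the DDD property constrains both. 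Your text only sketches a hoped-for double count and says this is "where the difficulty concentrates," so as written it is not a proof. The gap is genuine: for nonabelian $G$ (and the paper's Table \ref{DDCDs} is full of nonabelian examples, e.g. $Q_8$, $Q_{16}$, $SL(2,3)$, $Z_7{:}Z_3$) the elements $SS^{-1}$ and $S^{-1}S$ of $\mathbb{Z}[G]$ differ in general, and no formal manipulation transfers membership in the span of $e$, $\underline{H}$, $\underline{G}$ from one to the other. In matrix terms, the hypothesis says $MM^t=(k-a)I+(a-b)P+bJ$ with $P$ the coset-partition matrix; $M^tM$ merely has the same spectrum, which pins down the eigenvalues but not the eigenprojections, i.e. one gets $M^tM=(k-a)I+(a-b)nQ+\cdots$ for some projection $Q$ of the correct rank with no a priori reason that $Q$ is the coset projection. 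Note that the paper offers no written proof to compare against — it declares the argument "similar" to the undirected case of \cite{kabanov} — but the similarity breaks exactly at your step, since in the Deza-graph setting $S=S^{-1}$ makes the two products coincide for free. A complete proof of Theorem \ref{cosets-DDD} must either supply the lemma you left open or read the hypothesis as a simultaneous condition on $SS^{-1}$ and $S^{-1}S$ (which is what a computer search in effect verifies).

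A secondary point: your parameter bookkeeping ($\lambda_1=a+a'$, $\lambda_2=b+b'$, from the combined count in Definition 1.1) conflicts with how the paper actually uses the theorem. In Theorem \ref{v=8} the coefficients $a=1$, $b=0$ of $SS^{-1}$ are read off directly as $\lambda_2=1$, $\lambda_1=0$ for the parameters $(8,3,0,1,4,2)$; under your convention that example would have $\lambda_2=2$, and indeed $(8,3,0,1,4,2)$ fails the combined-count identity $2k^2=2k+\lambda_1(n-1)+\lambda_2(v-n)$ while satisfying $k^2=k+\lambda_1(n-1)+\lambda_2 n(m-1)$. So the operative definition counts dominating-both and dominated-by-both vertices separately (as in the paper's definition of doubly regular digraphs), which makes your missing step even sharper: you need the coefficients of $S^{-1}S$ to be constant on $H\setminus\{e\}$ and on $G\setminus H$ \emph{with the same values} $a$ and $b$, not merely some $a'$ and $b'$.
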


We omit a proof of Theorem \ref{cosets-DDD} since it is similar to the proof of Theorem \ref{cosets-DDG} given in \cite{kabanov}. 
The condition $S \cap S^{-1} = \emptyset$ ensures that the adjacency matrix of $\Cay(G,S)$ is skew. We found Theorem \ref{cosets-DDD} very useful when constructing divisible design Cayley digraphs
with the help of a computer. 

\bigskip

In the sequel we give constructions of divisible design Cayley digraphs
and nonexistence results. 
Throughout the paper we denote by $I_v$, $O_v$ and $J_v$ the identity matrix, the zero-matrix and the all-one matrix of size $v \times v$, respectively.

\subsection{Nonexistence results}\label{nonexistence}

A list of feasible parameters for DDDs on at most 27 vertices is given in \cite{ddd}. In this section we establish nonexistence of some 
divisible design Cayley digraphs with feasible parameters.
The results are obtained using Magma \cite{magma} and GAP \cite{gap-digraphs}.

\begin{thm} \label{thm-nonex}
The following divisible design Cayley digraphs do not exist: 
\begin{displaymath}   
\begin{tabular}{l l l l}
$(12,4,2,1,6,2)$ \hspace{0.5cm}  & $(20,5,2,1,10,2)$ \hspace{0.5cm}  & $(24,10,3,4,8,3)$ \hspace{0.5cm}  & $(24,10,6,3,3,8)$ \\
$(16,7,2,3,4,4)$ \hspace{0.5cm}  & $(20,9,3,4,4,5)$  \hspace{0.5cm}  & $(24,11,10,4,6,4)$ \hspace{0.5cm} & $(27,12,6,5,9,3)$ \\
$(18,6,0,2,6,3)$ \hspace{0.5cm}  & $(22,5,0,1,11,2)$ \hspace{0.5cm}  & $(24,9,4,3,6,4)$ \hspace{0.5cm}   & $(27,11,7,4,9,3)$ \\
$(20,8,2,3,10,2)$ \hspace{0.5cm} & $(24,10,2,4,12,2)$ \hspace{0.5cm} & $(24,11,4,5,4,6)$ \hspace{0.5cm}  & $(27,8,4,2,9,3)$  \\
$(20,7,6,2,10,2)$ \hspace{0.5cm} & $(24,9,6,3,12,2)$ \hspace{0.5cm}  &  &  \\
\end{tabular}                
\end{displaymath}
\end{thm}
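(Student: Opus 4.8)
The plan is to establish each nonexistence by an exhaustive computer search, using Theorem \ref{cosets-DDD} to reduce the problem to a tractable algebraic condition on the connection set. Fix one of the listed parameter tuples $(v,k,\lambda_1,\lambda_2,m,n)$. First I would enumerate, up to isomorphism, all groups $G$ of order $v$ (there are only finitely many, and for $v \le 27$ they are all available from the small-groups library), and for each $G$ all subgroups $H \le G$ of order $n$; note that $G$ then splits into $m = v/n$ cosets of $H$.

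By Theorem \ref{cosets-DDD}, a divisible design Cayley digraph with these parameters on $G$ exists if and only if there is a subgroup $H$ of order $n$ together with a subset $S \subseteq G \setminus \{e\}$ with $|S| = k$ and $S \cap S^{-1} = \emptyset$ such that, writing $d(g) = |\{(s,t) \in S \times S : s t^{-1} = g\}|$ for the coefficient of $g$ in $SS^{-1}$, one has $d(e) = k$ and
\[
d(g) = \begin{cases} \lambda_1, & g \in H \setminus \{e\}, \\ \lambda_2, & g \in G \setminus H, \end{cases}
\]
or the same with $\lambda_1$ and $\lambda_2$ interchanged (the two cases corresponding to whether $H = A \cup \{e\}$ or $H = B \cup \{e\}$ in the notation of the theorem). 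Thus the existence question becomes: for each $G$ and each such $H$, decide whether a connection set $S$ with this prescribed quotient-multiplicity profile exists. Showing that no such $S$ exists for any admissible pair $(G,H)$, over all isomorphism types of $G$, proves the claimed nonexistence.

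The search itself I would organise as follows. For fixed $G$ and $H$, the condition above requires the quotient multiset $\{st^{-1} : s,t \in S\}$ to be constant on $H \setminus \{e\}$ and constant on $G \setminus H$, with the skew constraint $S \cap S^{-1} = \emptyset$ forbidding $g$ and $g^{-1}$ from both lying in $S$. To keep the search feasible I would quotient out the obvious symmetries: right translation and the action of $\mathrm{Aut}(G)$ both preserve the defining conditions, so it suffices to build $S$ by orderly generation up to these symmetries rather than enumerating all $\binom{v-1}{k}$ candidate subsets, pruning a partial set as soon as some quotient multiplicity exceeds its target value $\lambda_1$ or $\lambda_2$. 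Each surviving candidate is then checked directly against the profile; for every listed parameter tuple all branches terminate without producing a valid $S$, which yields the stated nonexistence.

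The main obstacle is controlling the size of the search, since a naive enumeration of $k$-subsets is hopeless for the larger cases (for instance $k = 12$ on $v = 27$). The argument therefore relies essentially on Theorem \ref{cosets-DDD}: fixing the subgroup $H$ first confines $S$ to sets with the coset-constant quotient profile, and combining this with the multiplicity-bound pruning and the factoring out of $\mathrm{Aut}(G)$ and translations makes the search tractable. The delicate point, on which the validity of the conclusion rests, is to certify that this pruned, symmetry-reduced search is genuinely exhaustive across all isomorphism types of groups of order $v$ and all subgroups of order $n$, so that an empty output is a proof of nonexistence rather than an artefact of an incomplete enumeration.
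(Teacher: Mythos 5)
Your proposal is correct and takes essentially the same route as the paper: the paper proves Theorem \ref{thm-nonex} by exhaustive computer search (Magma and GAP), and explicitly remarks that Theorem \ref{cosets-DDD} was the key tool making such searches feasible, exactly as you describe. The paper records no further details, so your reduction to pairs $(G,H)$ with a prescribed quotient-multiplicity profile for $S$, together with symmetry reduction and pruning, is simply a more explicit account of the same computational method.
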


\subsection{Constructions of divisible design Cayley digraphs from Paley designs} \label{constructions-Paley}

Let $q$ be a prime power. If $q \equiv 3\ (mod\ 4)$ then the set of non-zero squares in $\GF(q)$ forms a difference set in the additive group of $\GF(q)$, and in case $q \equiv 1\ (mod\ 4)$ 
the set of non-zero squares in $\GF(q)$ forms a partial difference set in the additive group of $\GF(q)$. 
The conclusion is  that the case $q \equiv 3\ (mod\ 4)$ yields a Cayley digraph, and the case $q \equiv 1\ (mod\ 4)$ yields a Cayley graph. 
The adjacency matrix of the Cayley digraph obtained in the case $q \equiv 3\ (mod\ 4)$ is the incidence matrix of a symmetric design called a Paley design, which is a Hadamard design with parameters 
$(q,\frac{q-1}{2},\frac{q-3}{4})$. The fact that $-1$ is not a square in the field $\GF(q)$ when $q \equiv 3\ (mod\ 4)$ implies that the incidence matrix of a Paley design is skew. The graph
obtained in the case $q \equiv 1\ (mod\ 4)$ is called a Paley graph, which is a strongly regular graph with parameters $(q,\frac{q-1}{2},\frac{q-5}{4},\frac{q-1}{4})$.

\begin{thm}\label{diag-gen}
Let $A$ be the incidence matrix of a symmetric $(v,k, \lambda)$ design with a regular automorphism group $G$. If $A$ is skew, 
then there exists a divisible design Cayley digraph with parameters $(vt,k,\lambda,0,t,v)$.
\end{thm}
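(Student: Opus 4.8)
The plan is to realize the required digraph as a Cayley digraph on the direct product $\tilde G = G \times \mathbb{Z}_t$ (any group of order $t$ would do) whose adjacency matrix is the block--diagonal matrix $I_t \otimes A$; this ``diagonal'' placement of $A$ is what names the theorem. I would then verify the divisible design conditions, preferably through Theorem \ref{cosets-DDD}, with a direct matrix computation as a cross--check.

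First I would translate the hypotheses into the language of connection sets. Since $G$ acts regularly on the symmetric $(v,k,\lambda)$ design, $A$ is the adjacency matrix of $\Cay(G,S)$ for a $(v,k,\lambda)$-difference set $S \subseteq G$ with $e \notin S$, so that in the group ring $\mathbb{Z}[G]$ one has $SS^{-1} = ke + \lambda(G-e)$. The hypothesis that $A$ is skew means precisely that $A$ and $A^t$ have disjoint supports, which for a Cayley digraph is exactly the condition $S \cap S^{-1} = \emptyset$. These two facts are all that the construction will use.

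Next I would set $S' = S \times \{0\} \subseteq \tilde G$ and consider $\Cay(\tilde G, S')$. Immediately $|S'| = k$ and $S' \cap (S')^{-1} = (S \cap S^{-1}) \times \{0\} = \emptyset$, so the adjacency matrix is skew. Computing in $\mathbb{Z}[\tilde G]$ gives $S'(S')^{-1} = (SS^{-1}) \times \{0\}$, so in the notation of Theorem \ref{cosets-DDD} one has $S'(S')^{-1} = \lambda A'' + 0 \cdot B'' + ke$ with $A'' = (G \setminus \{e\}) \times \{0\}$ and $B''$ the remaining non--identity elements. The point I would stress is that $A'' \cup \{e\} = G \times \{0\}$ is a subgroup of $\tilde G$ of order $v$; Theorem \ref{cosets-DDD} then shows $\Cay(\tilde G, S')$ is a DDD whose canonical classes are the $t$ cosets $G \times \{i\}$, each of size $v$, with same--class parameter $\lambda_1 = \lambda$ and different--class parameter $\lambda_2 = 0$. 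This yields the parameters $(vt,k,\lambda,0,t,v)$.

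As a cross--check I would argue at the matrix level: with $M = I_t \otimes A$ one has $MM^t = I_t \otimes (AA^t) = I_t \otimes \bigl((k-\lambda)I_v + \lambda J_v\bigr) = kI_{vt} + \lambda\,\bigl(I_t \otimes (J_v - I_v)\bigr)$, and likewise for $M^tM$, which is exactly the defining equation of a symmetric divisible design $(vt,k,\lambda,0,t,v)$ with classes $G \times \{i\}$; since $M$ is skew, the remark preceding the Cayley material turns this into the neighbourhood design of a DDD. The only genuinely delicate points are bookkeeping ones: confirming that skewness forces $S \cap S^{-1} = \emptyset$, and checking that the cross--class coefficient is exactly $0$, i.e.\ that two vertices in different levels share no common dominating or dominated vertex. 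Both follow because no arc joins distinct levels, so I do not expect any substantial obstacle beyond this verification.
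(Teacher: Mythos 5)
Your proof is correct, and it arrives at the same object as the paper --- the digraph with adjacency matrix $I_t \otimes A$ on the group $G \times \mathbb{Z}_t$ --- but it verifies its two defining properties in the opposite order and by different means. The paper's proof is Cayley-last: it cites \cite[Lemma 4.1]{ddd} for the fact that $I_t \otimes A$ is the adjacency matrix of a DDD with parameters $(vt,k,\lambda,0,t,v)$, observes that $G \times Z_t$ acts regularly on it, and then invokes the Sabidussi-type criterion (Theorem \ref{regular}) to conclude the DDD is a Cayley digraph. You instead go Cayley-first: you translate the regular action into a $(v,k,\lambda)$-difference set $S$ with $SS^{-1}=ke+\lambda(G-e)$, note that skewness of $A$ forces $e \notin S$ and $S \cap S^{-1}=\emptyset$, lift to the connection set $S'=S\times\{0\}$, and apply the group-ring characterization of Theorem \ref{cosets-DDD} with $a=\lambda$, $b=0$ and subgroup $G\times\{0\}$; your matrix identity $MM^t = kI_{vt} + \lambda\bigl(I_t \otimes (J_v - I_v)\bigr)$ then independently reproves the content of \cite[Lemma 4.1]{ddd} rather than citing it. Both routes are sound; yours buys self-containedness and an explicit connection set (which is exactly the data the authors say they feed to Theorem \ref{cosets-DDD} in their computer searches), at the cost of quietly using two standard facts the paper's citation-based proof sidesteps: that a point-regular automorphism group of a symmetric design is also block-regular (so that $A$ really is, up to permutation equivalence, a group-invariant matrix coming from a difference set), and that $S^{-1}S = ke + \lambda(G-e)$ as well (the dual property), which is what makes both the in- and out-neighbour counts in the DDD definition come out right. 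Neither point is a gap --- both are classical --- but they are worth flagging as the places where your argument leans on unstated background, whereas the paper's shorter proof leans on \cite{ddd} and Theorem \ref{regular} instead.
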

\begin{proof}
By \cite[Lemma 4.1]{ddd} the Kronecker product $I_t \otimes A$ is the adjacency matrix of a DDD with parameters $(vt,k,\lambda,0,t,v)$.
It is obvious that the direct product of the group $G$ and the cyclic group $Z_t$ acts regularly on this DDD. By Theorem \ref{regular} the constructed DDD is a Cayley digraph.
\end{proof}

\begin{cor}\label{diag}
Let $v$ be a prime power, $v \equiv 3\ (mod\ 4)$, and let $t$ be a non-negative integer. Then there exists a divisible design Cayley digraph with parameters $(vt,\frac{v-1}{2},\frac{v-3}{4},0,t,v)$.
\end{cor}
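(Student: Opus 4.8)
The plan is to recognize this statement as the special case of Theorem \ref{diag-gen} in which $A$ is the incidence matrix of the Paley design on $v$ points. First I would invoke the discussion preceding Theorem \ref{diag-gen}: since $v$ is a prime power with $v \equiv 3\ (mod\ 4)$, the set of non-zero squares in $\GF(v)$ is a difference set in the additive group $(\GF(v),+)$, and the associated Cayley digraph has adjacency matrix equal to the incidence matrix of a symmetric $(v,\frac{v-1}{2},\frac{v-3}{4})$ design (the Paley design). This already supplies a concrete symmetric design with the required parameters $k=\frac{v-1}{2}$ and $\lambda=\frac{v-3}{4}$, so no separate existence argument is needed.

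Next I would verify the two hypotheses of Theorem \ref{diag-gen}. For the first, the additive group $(\GF(v),+)$ acts on the point set by translation $x \mapsto x+g$; because the blocks of the Paley design are the translates of the square difference set, this action permutes the blocks and is sharply transitive on points, hence it is a regular automorphism group of the design. For the second, I would check skewness of the incidence matrix $A$: writing $A_{x,y}=1$ exactly when $y-x$ is a non-zero square, the fact that $-1$ is a non-square in $\GF(v)$ when $v \equiv 3\ (mod\ 4)$ means that for each nonzero $s$ precisely one of $s$ and $-s$ is a square. Consequently $A$ and $A^t$ never share a nonzero off-diagonal entry and $A$ has zero diagonal, so $A+A^t$ is a $(0,1)$-matrix, which is exactly the skewness condition recorded in the preamble to this subsection.

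With both hypotheses confirmed, I would apply Theorem \ref{diag-gen} with these values of $k$ and $\lambda$ and an arbitrary non-negative integer $t$. The theorem then yields a divisible design Cayley digraph with parameters $(vt,k,\lambda,0,t,v)=(vt,\frac{v-1}{2},\frac{v-3}{4},0,t,v)$, which is the claim.

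I do not expect a genuine obstacle here, since the corollary is a direct specialization of Theorem \ref{diag-gen}. The only two points requiring care are that translation really gives a \emph{regular} automorphism group of the Paley design and that $-1$ is a non-square precisely in the congruence class $v \equiv 3\ (mod\ 4)$; both are already established in the surrounding text, so the proof reduces to recording these observations and substituting the parameters.
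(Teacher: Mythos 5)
Your proposal is correct and follows essentially the same route as the paper: specialize Theorem \ref{diag-gen} to the Paley design, whose incidence matrix is skew because $-1$ is a non-square in $\GF(v)$ when $v \equiv 3\ (mod\ 4)$, and which admits the additive group of $\GF(v)$ as a regular automorphism group. You merely spell out the verification of the two hypotheses in more detail than the paper, which simply cites the Paley construction.
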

\begin{proof}
Let $D$ be the incidence matrix of the Paley design with parameters
$(v,\frac{v-1}{2},\frac{v-3}{4})$. By the Paley construction $D$ is a skew matrix. By Theorem \ref{diag-gen} the Kronecker product $I_t \otimes D$ is the adjacency matrix of a divisible design Cayley
digraph with parameters $(vt,\frac{v-1}{2},\frac{v-3}{4},0,t,v)$. The direct product of the additive group of $\GF(v)$ and the cyclic group $Z_t$ acts regularly on this digraph. 
\end{proof}

\begin{thm}\label{all-one-gen}
Let $A$ be the incidence matrix of a symmetric $(v,k, \lambda)$ design with a regular automorphism group $G$. If $A$ is skew, 
then there exists a divisible design Cayley digraph with parameters $(vn,kn,kn,\lambda n,v,n)$.
\end{thm}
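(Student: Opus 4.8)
The plan is to imitate the proof of Theorem \ref{diag-gen}, simply replacing the Kronecker factor $I_t$ by the all-one matrix $J_n$. Concretely, I would take $M = A \otimes J_n$ as the candidate adjacency matrix on $vn$ vertices, with the $v$ classes indexed by the design coordinate: class $i$ consists of the vertices with first coordinate $i$ and the $n$ second coordinates varying. First I would check that $M$ defines an asymmetric digraph of degree $kn$. Since $A$ is skew its diagonal vanishes, so the diagonal blocks $A_{ii}J_n$ are zero and $M$ has zero diagonal; moreover $M + M^t = (A + A^t) \otimes J_n$ is a $(0,1)$-matrix, so $M$ is skew, as required of a DDD. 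Each row of $M$ sums to $kn$, giving the stated degree.

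The core step is the divisible-design identity. Using that $A$ is the incidence matrix of a symmetric design, so $A A^t = A^t A = (k-\lambda) I_v + \lambda J_v$, together with $J_n^2 = n J_n$, I would compute
$$M M^t = (A A^t) \otimes (n J_n) = n(k-\lambda)\,(I_v \otimes J_n) + n\lambda\,(J_v \otimes J_n),$$
and observe that $M^t M$ yields the identical expression. Reading off entries gives: both the diagonal and the within-class off-diagonal entries equal $kn$, while the across-class entries equal $\lambda n$. This is precisely the normal form of a symmetric divisible design with parameters $(vn, kn, kn, \lambda n, v, n)$, so $M$ is the adjacency matrix of such a DDD; this is the $J_n$-analogue of \cite[Lemma 4.1]{ddd}.

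Finally I would produce a regular automorphism group, exactly as in Theorem \ref{diag-gen}. Identifying the second coordinate with $Z_n$ and letting $G$ act on the first coordinate as the regular design automorphism group, the group $G \times Z_n$ acts by $(g,h)\cdot(i,j) = (gi,\,h+j)$. An arc $(i,j)\to(i',j')$ of $M$ is present exactly when $A_{ii'}=1$, independently of the second coordinate, and each $g$ preserves $A$, so this action fixes the arc set; as $G$ is regular on the $v$ points and $Z_n$ is regular on itself, $G \times Z_n$ is sharply transitive on the $vn$ vertices, hence regular. By Theorem \ref{regular} the digraph is a Cayley digraph, which finishes the argument.

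The Kronecker algebra is routine, so the only delicate point is conceptual rather than computational: the $Z_n$-factor is invisible to arc incidence, since all second coordinates behave alike, yet it is exactly what upgrades the $G$-action from transitive-on-classes to sharply-transitive-on-vertices, and one must confirm that adjoining it keeps the whole action inside the automorphism group of $M$. A useful sanity check is that here $\lambda_1 = kn$ coincides with the degree, reflecting that two vertices in a common class have identical out-neighbourhoods; this is consistent with the DDD definition and makes the resulting design proper whenever $\lambda \neq k$.
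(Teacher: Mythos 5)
Your proposal is correct and follows the paper's proof exactly: the paper likewise takes $A \otimes J_n$ as the adjacency matrix, citing \cite[Lemma 4.2]{ddd} for the DDD property (which you instead verify directly via the Kronecker computation $MM^t = M^tM = n(k-\lambda)(I_v \otimes J_n) + n\lambda(J_v \otimes J_n)$), and then observes that $G \times Z_n$ acts regularly, so Theorem \ref{regular} applies. Unpacking the cited lemma is a harmless elaboration, not a different route.
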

\begin{proof}
By \cite[Lemma 4.2]{ddd} the Kronecker product $A \otimes J_n$ is the adjacency matrix of a DDD with parameters $(vn,kn,kn,\lambda n,v,n)$.
This DDD admits a regular action of the direct product of the cyclic group $Z_n$ and the group of $G$, so it is a divisible design Cayley digraph.
\end{proof}

\begin{cor}\label{all-one}
Let $v$ be a prime power, $v \equiv 3\ (mod\ 4)$, and $n$ be a non-negative integer. There exists a divisible design Cayley digraph with parameters 
$(vn,\frac{v-1}{2}n,\frac{v-1}{2}n,\frac{v-3}{4} n,v,n)$.
\end{cor}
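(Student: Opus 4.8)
The plan is to read this off as the specialization of Theorem~\ref{all-one-gen} to the Paley design, in exact parallel with the way Corollary~\ref{diag} specializes Theorem~\ref{diag-gen}. Since $v$ is a prime power with $v \equiv 3\ (mod\ 4)$, the Paley construction recalled above furnishes a symmetric $(v,\frac{v-1}{2},\frac{v-3}{4})$ design; I would let $D$ be its incidence matrix and then verify that $D$ meets the two hypotheses of Theorem~\ref{all-one-gen}.

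First I would record the regularity of the automorphism group. The design is built from the set of non-zero squares as a difference set in the additive group $(\GF(v),+)$, and a difference-set construction in a group always admits that group acting regularly by translation as an automorphism group; hence $(\GF(v),+)$ plays the role of the group $G$ in Theorem~\ref{all-one-gen}. Next I would record skewness: because $-1$ is a non-square in $\GF(v)$ when $v \equiv 3\ (mod\ 4)$, the matrix $D$ is skew, exactly as noted in the Paley construction. These are precisely the two conditions the theorem requires of $A$.

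With both facts in hand I would invoke Theorem~\ref{all-one-gen} with $A = D$, $k = \frac{v-1}{2}$ and $\lambda = \frac{v-3}{4}$. The theorem then produces a divisible design Cayley digraph whose adjacency matrix is $D \otimes J_n$, on which the direct product of the cyclic group $Z_n$ and $(\GF(v),+)$ acts regularly, and whose parameters are $(vn,kn,kn,\lambda n,v,n)$. Substituting the values of $k$ and $\lambda$ gives exactly $(vn,\frac{v-1}{2}n,\frac{v-1}{2}n,\frac{v-3}{4}n,v,n)$, which is the claimed parameter set.

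The argument is entirely mechanical, so I do not expect a genuine obstacle here; the corollary is essentially a dictionary translation of the general theorem into the Paley setting. The only points that warrant a moment's care are confirming that the additive group of $\GF(v)$ acts regularly (immediate from the difference-set description of the squares) and carrying out the parameter substitution correctly so that the third and fourth entries come out as $\frac{v-1}{2}n$ and $\frac{v-3}{4}n$ rather than being transposed.
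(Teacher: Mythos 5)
Your proposal is correct and matches the paper's own proof exactly: both specialize Theorem~\ref{diag-gen}'s companion result, Theorem~\ref{all-one-gen}, to the Paley design, verifying skewness of $D$ via $-1$ being a non-square and the regular translation action of the additive group of $\GF(v)$, then reading off the parameters of $D \otimes J_n$. If anything, you are slightly more careful than the paper, which leaves the regularity of the automorphism group implicit (and whose proof contains a harmless typo, citing Corollary~\ref{all-one} itself rather than Theorem~\ref{all-one-gen}).
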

\begin{proof}
Let $D$ be the incidence matrix of the Paley design with parameters
$(v,\frac{v-1}{2},\frac{v-3}{4})$. The matrix $D$ is skew.
By Theorem \ref{all-one} the Kronecker product $D \otimes J_n$ is the adjacency matrix of a divisible design Cayley digraph with parameters $(vn,\frac{v-1}{2}n,\frac{v-1}{2}n,\frac{v-3}{4} n,v,n)$, 
which admits a regular action of the direct product of the cyclic group $Z_n$ and the additive group of $\GF(v)$.
\end{proof}

\begin{thm}\label{square}
Let $v$ be a prime power, $v \equiv 3\ (mod\ 4)$. Then there exists a 
divisible design Cayley digraph with parameters 
$(v^2,v\frac{v-1}{2},v\frac{v-3}{4},(\frac{v-1}{2})^2,v,v)$.
\end{thm}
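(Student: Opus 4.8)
The plan is to realise the digraph as a Cayley digraph on the additive group $G=(\GF(v^2),+)$, taking the additive subgroup $H=(\GF(v),+)$ determined by the subfield as the subgroup whose cosets are the $v$ classes of size $v$. This is the natural next case after Theorems~\ref{diag-gen} and \ref{all-one-gen}: there one of the tensor factors was $I_t$ (forcing $\lambda_2=0$) or $J_n$ (forcing $\lambda_1=\lambda_2\cdot\tfrac{k}{\lambda}$-type behaviour), whereas here both $\lambda_1=v\tfrac{v-3}{4}$ and $\lambda_2=(\tfrac{v-1}{2})^2$ are nonzero and unequal, so the construction must genuinely use the arithmetic of $\GF(v)$ rather than a bare Kronecker product. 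Write $k_0=\tfrac{v-1}{2}$, $\lambda_0=\tfrac{v-3}{4}$, and let $D$ be the skew incidence matrix of the Paley design on $\GF(v)$. The algebraic engine will be the three identities $D+D^{t}=J_v-I_v$, $\;DD^{t}=D^{t}D=\tfrac{v+1}{4}I_v+\tfrac{v-3}{4}J_v$, and the derived relation $DD=\tfrac{v+1}{4}(J_v-I_v)-D$ (equivalently $DD=(\tfrac{v+1}{4}-1)D+\tfrac{v+1}{4}D^{t}$), which expresses the otherwise awkward square $DD$ back in terms of $D,D^{t}$ and lets every product that arises be reduced to an $I_v,J_v$ combination.

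First I would write the $v^2\times v^2$ adjacency matrix $M$ as a $v\times v$ array of $v\times v$ blocks, placing a copy of $D$ on every diagonal block (so each class carries its own Paley tournament) and choosing the off‑diagonal blocks to be $(0,1)$-matrices with all row and column sums equal to $k_0$, arranged so that the underlying undirected graph is the Cayley graph on $\GF(v^2)$ whose non-adjacency set is an order-$v$ subgroup $L$ complementary to $H$; concretely $M$ is the Cayley digraph of a connection set $S\subseteq \GF(v^2)\setminus L$ obtained by selecting, in each of the $v$ multiplicative cosets of $\GF(v)^{\ast}$ other than the one spanning $L$, a skew half (the image of the subfield squares under a fixed representative, the sign governed by the subfield quadratic character $\eta$). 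The routine verifications are: $M$ is $(0,1)$ with zero diagonal; it is skew, i.e.\ $S\cap(-S)=\emptyset$, because $\eta(-h)=-\eta(h)$ on $\GF(v)$ as $-1$ is a non-square ($v\equiv3\ (\mathrm{mod}\ 4)$); and each row of $M$ sums to $k_0+(v-1)k_0=vk_0$. The heart of the proof is the divisible-design identity: I would show $MM^{t}=vk_0\,I_{v^2}+v\lambda_0\,\bigl(I_v\otimes(J_v-I_v)\bigr)+k_0^{2}\,\bigl((J_v-I_v)\otimes J_v\bigr)$, and the same for $M^{t}M$, so that by the theorem relating a DDD to its neighbourhood design the cosets of $H$ give a symmetric divisible design with parameters $(v^2,vk_0,v\lambda_0,k_0^{2},v,v)$. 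Equivalently, in the group ring one checks that the coefficients of $SS^{(-1)}$ are $vk_0$ at $0$, $v\lambda_0$ on $H\setminus\{e\}$ and $k_0^{2}$ on $G\setminus H$, whereupon Theorem~\ref{cosets-DDD} applies directly; the regular translation action of $G$ then yields, via Theorem~\ref{regular}, that the digraph is a Cayley digraph. As a consistency check one verifies $k^{2}=k+\lambda_1(n-1)+\lambda_2 n(m-1)$, which here reduces to the Paley identity $k_0^{2}=k_0+(v-1)\lambda_0$.

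The main obstacle is precisely the middle step: forcing the between‑class common‑neighbour count to equal exactly $k_0^{2}$ (which is the ``average'' value $\tfrac{k_0}{v}\cdot k_0\cdot v$) while simultaneously keeping $M$ skew, i.e.\ producing a \emph{skew} divisible difference set. Naive attempts fail in instructive ways: taking all between-class blocks to be $J_v$ (as in Theorem~\ref{all-one-gen}) gives the wrong pair $(\lambda_1,\lambda_2)=(vk_0,v\lambda_0)$; taking the slices of $S$ to be additive translates $c_d+Q$ of the quadratic-residue set makes the within-class count automatically $v\lambda_0$ and reduces the between-class condition to the planarity of $d\mapsto c_d$, but the skew condition then forces $d\mapsto c_d$ to be an \emph{odd} planar function, which does not exist over $\GF(v)$. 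Thus the signs in the coset-by-coset selection above cannot be uniform and must be coordinated; equivalently, in the matrix picture the expansion of $MM^{t}$ produces cross terms carrying $DD=(\tfrac{v+1}{4}-1)D+\tfrac{v+1}{4}D^{t}$ and $D^{t}D^{t}$, and the whole point is to arrange the off-diagonal blocks so that all genuinely $D$- and $D^{t}$-dependent contributions cancel and only $I_v,J_v$ terms survive. I expect this cancellation to be controlled by Gauss-sum / character-sum evaluations over $\GF(v^2)$ relative to $\GF(v)$: summing $\widehat{SS^{(-1)}}$ against the additive characters of $G$, the nontrivial characters split according to whether they are trivial on $H$, and the flatness of the two resulting families of character sums is exactly what delivers the two constants $v\lambda_0$ and $k_0^{2}$.
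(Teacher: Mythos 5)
There is a genuine gap: your argument never actually produces the connection set $S$. Everything before and after the ``heart of the proof'' is scaffolding --- the group-ring criterion (coefficients $vk_0$ at $e$, $v\lambda_0$ on $H\setminus\{e\}$, $k_0^2$ on $G\setminus H$, then Theorem \ref{cosets-DDD} and Theorem \ref{regular}) is the correct target, and your parameter checks, the identity $DD=\tfrac{v+1}{4}(J_v-I_v)-D$, and the reduction to $k_0^2=k_0+(v-1)\lambda_0$ are all sound. But at the decisive step you only rule constructions \emph{out}: all-$J_v$ blocks give the wrong $(\lambda_1,\lambda_2)$, and uniform translates $S_u=c_u+Q$ are killed by your (correct) observation that skewness would force an odd planar function, which cannot exist --- indeed, for $f$ odd the difference map $\Delta_d(x)=f(x+d)-f(x)$ satisfies $\Delta_d(-x-d)=\Delta_d(x)$, so it is never bijective. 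Having shown the sign pattern must be a coordinated mixture of $Q$- and $N$-translates, you exhibit no such mixture and carry out no character-sum computation; ``I expect this cancellation to be controlled by Gauss-sum evaluations'' is a research programme, not a proof, and since mixed intersections $|(a+Q)\cap(b+N)|$ are not constant but depend on the quadratic character of $a-b$, it is exactly the existence of a compatible assignment $(g,\epsilon)$ that needs to be established. So the proposal stops precisely where the difficulty begins.

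It is worth noting that the paper sidesteps this difficulty entirely: no divisible difference set in $\GF(v)^2$ is constructed. Instead, the paper symmetrizes the Paley incidence matrix as $D=D_1R$ ($R$ the back diagonal identity), substitutes $D_1$ for each entry $1$ of $D$ and $\overline{D}_1-I_v$ for each entry $0$, quotes \cite[Theorem 4.1]{ddd} for the fact that the resulting matrix is the adjacency matrix of a DDD with the stated parameters, and then concludes Cayley-ness from a regular action of the direct product of the additive group of $\GF(v)$ with itself via Theorem \ref{regular} --- the same two-step pattern as Theorems \ref{diag-gen} and \ref{all-one-gen}. Your choice of group, $(\GF(v^2),+)\cong E_{v^2}$ with the subfield $H$ giving the classes, is abstractly the same group, so your framing is compatible with the paper's; but where the paper reduces the theorem to a known matrix construction plus an observed group action, your route requires solving a skew divisible difference set problem head-on, and that problem is left unsolved. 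To repair your proof you would either have to exhibit $S$ explicitly and verify the $SS^{-1}$ coefficients, or translate the paper's block-matrix construction back into a connection set and check invariance under the regular group --- neither of which is done in the proposal.
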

\begin{proof}
Let $D_1$ be the incidence matrix of the Paley design with parameters
$(v,\frac{v-1}{2},\frac{v-3}{4})$, and $\overline{D_1}$ be the incidence matrix of its complementary design. 
By the Paley construction $D_1$ is a skew matrix.
Let $D=D_1R$, where $R$ is the back diagonal identity matrix, then $D$ is a symmetric matrix and an incidence matrix of a Hadamard 
$(v, \frac{v-1}{2}, \frac{v-3}{4})$ design.
Replace each entry value 1 of the matrix $D$ by $D_1$, and each entry value 0 of $D$ by $\overline{D}_1 - I_v$. According to \cite[Theorem 4.1]{ddd},
the resulting matrix $M$ is the adjacency matrix of a 
$DDD(v^2,v\frac{v-1}{2},v\frac{v-3}{4},(\frac{v-1}{2})^2,v,v)$.
The construction shows that this DDD admits a regular action of the direct product of the additive group of $\GF(v)$ by itself.
\end{proof}

\begin{thm}\label{3-5}
Let $t$ be a non-negative integer. If $4t+3$ and $4t+5$ are prime powers,
then there exists a divisible design Cayley digraph with parameters
$((4t+5)(4t+3),(4t+4)(2t+1),l(4t+4),(2t+1)^2,4t+5,4t+3)$. 
\end{thm}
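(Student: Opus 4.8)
The plan is to realise the digraph as a Kronecker-type combination of the two Paley structures attached to $q_1:=4t+3$ and $q_2:=4t+5$, and then to exhibit an abelian regular automorphism group. Since $q_1\equiv 3\pmod 4$, let $D_1$ be the (skew) incidence matrix of the Paley design on $\GF(q_1)$; it satisfies $D_1+D_1^t=J_{q_1}-I_{q_1}$ and $D_1D_1^t=D_1^tD_1=(t+1)I_{q_1}+tJ_{q_1}$. Since $q_2\equiv1\pmod4$, let $P$ be the adjacency matrix of the Paley graph on $\GF(q_2)$ and $\overline P=J_{q_2}-I_{q_2}-P$ its complement; both are strongly regular with parameters $(q_2,2t+2,t,t+1)$. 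On the vertex set $\GF(q_2)\times\GF(q_1)$, partitioned into the $m=q_2$ classes $\{a\}\times\GF(q_1)$ of size $n=q_1$, I would set
\[ M=P\otimes D_1+\overline P\otimes D_1^t. \]
Equivalently, this is $\Cay(G,S)$ with $G=(\GF(q_2),+)\times(\GF(q_1),+)$ and $S=\{(c,s):c,s\neq0,\ \eta_2(c)=\eta_1(s)\}$, where $\eta_i$ is the quadratic character of $\GF(q_i)$.

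First I would record the elementary properties. The matrix $M$ is a $(0,1)$-matrix with zero diagonal, and each row sum equals $(2t+2)(2t+1)+(2t+2)(2t+1)=(4t+4)(2t+1)=k$, because among the nonzero elements of $\GF(q_2)$ there are $2t+2$ squares (contributing blocks $D_1$) and $2t+2$ nonsquares (contributing blocks $D_1^t$), each of row sum $2t+1$. It is skew, since $M+M^t=(P+\overline P)\otimes(D_1+D_1^t)=(J_{q_2}-I_{q_2})\otimes(J_{q_1}-I_{q_1})$ is a $(0,1)$-matrix; equivalently $S\cap(-S)=\emptyset$, because $\eta_2(-1)=1$ while $\eta_1(-1)=-1$, so $(c,s)\in S$ forces $\eta_2(-c)=\eta_2(c)=\eta_1(s)=-\eta_1(-s)$ and hence $(-c,-s)\notin S$.

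The heart of the proof is the computation of $MM^t$ (and, by the $D_1\leftrightarrow D_1^t$ symmetry, of $M^tM$). Expanding the product, the equal-type terms give $(P^2+\overline P^2)\otimes\big((t+1)I_{q_1}+tJ_{q_1}\big)$, while the mixed-type terms give $P\overline P\otimes\big(D_1^2+(D_1^t)^2\big)$. Here I would use the strongly regular identities $P\overline P=\overline PP=(t+1)(J_{q_2}-I_{q_2})$ and $P^2+\overline P^2=(2t+3)I_{q_2}+(2t+1)J_{q_2}$, together with the inner identity $D_1^2+(D_1^t)^2=(2t+1)(J_{q_1}-I_{q_1})$, which follows by squaring $D_1+D_1^t=J_{q_1}-I_{q_1}$ and subtracting $2D_1D_1^t$. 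Collecting terms in the basis $\{I_{q_2}\otimes I_{q_1},\,I_{q_2}\otimes J_{q_1},\,J_{q_2}\otimes I_{q_1},\,J_{q_2}\otimes J_{q_1}\}$, the decisive point is that the coefficient of the spurious term $J_{q_2}\otimes I_{q_1}$ vanishes, leaving
\[ MM^t=M^tM=kI_v+\lambda_1\big(I_{q_2}\otimes(J_{q_1}-I_{q_1})\big)+\lambda_2\big((J_{q_2}-I_{q_2})\otimes J_{q_1}\big), \]
with $\lambda_1=t(4t+4)$ (so that the coefficient $l$ in the statement equals $t$) and $\lambda_2=(2t+1)^2$. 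This says precisely that $M$ is a skew incidence matrix of a symmetric divisible design with the required parameters, hence the adjacency matrix of the desired DDD.

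It remains to identify the group. The additive group $G=(\GF(q_2),+)\times(\GF(q_1),+)$ acts regularly on the vertices by translation and preserves $M$, since $P,\overline P$ are Cayley graphs on $\GF(q_2)$ and $D_1$ is a Cayley digraph on $\GF(q_1)$; by Theorem \ref{regular} the digraph is therefore a Cayley digraph, and one may alternatively invoke Theorem \ref{cosets-DDD} with the subgroup $\{0\}\times\GF(q_1)$. The main obstacle is the coefficient bookkeeping in $MM^t$: the construction succeeds exactly because $P\overline P$ is a scalar multiple of $J_{q_2}-I_{q_2}$ and $D_1^2+(D_1^t)^2$ is a scalar multiple of $J_{q_1}-I_{q_1}$, so that the unwanted $J_{q_2}\otimes I_{q_1}$ contributions cancel; checking this cancellation — equivalently, that the between-class common-neighbour count is independent of the inner coordinate — is where the real content of the argument lies.
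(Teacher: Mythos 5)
Your construction is the same as the paper's: since $D_1^t=\overline D_1-I_{q_1}$ and your $\overline P$ equals the paper's $\overline C-I_{q_2}$, your matrix $M=P\otimes D_1+\overline P\otimes D_1^t$ coincides entry-for-entry with the paper's $M=C\otimes D+(\overline C-I)\otimes(\overline D-I)$, and you exhibit the same regular group, the direct product of the additive groups of $\GF(4t+5)$ and $\GF(4t+3)$ (the paper's ``$4t+1$'' with parameters $(4t+1,2t,t-1,t)$ is a misprint for the Paley graph on $4t+5$ vertices with parameters $(4t+5,2t+2,t,t+1)$). The only substantive difference is that where the paper outsources the divisible-design verification to \cite[Theorem 4.2]{ddd}, you carry out the $MM^t=M^tM$ computation yourself, and it checks out: $P\overline P=(t+1)(J_{q_2}-I_{q_2})$, $P^2+\overline P^2=(2t+3)I_{q_2}+(2t+1)J_{q_2}$, $D_1^2+(D_1^t)^2=(2t+1)(J_{q_1}-I_{q_1})$, the $J_{q_2}\otimes I_{q_1}$ coefficient indeed cancels, and the resulting $\lambda_1=t(4t+4)$ correctly identifies the ``$l$'' in the misprinted statement as $t$.
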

\begin{proof}
Let $D$ be the incidence matrix of the Paley design with parameters 
$(4t+3,2t+1,t)$ and $C$ be the adjacency matrix of the Paley graph on $4t+1$ vertices, which is a strongly regular graph with parameters $(4t+1,2t,t-1,t)$. $D$ is a skew matrix, 
and $C$ is a symmetric matrix. Let $\overline{D}=J_{4t+3}-D$ and $\overline{C}=J_{4t+1}-C$.
According to \cite[Theorem 4.2]{ddd}, the matrix $M=C \otimes D + (\overline C - I_{4t+1}) \otimes (\overline D - I_{4t+3})$ 
is the adjacency matrix of a $DDD((4t+5)(4t+3),(4t+4)(2t+1),t(4t+4),(2t+1)^2,4t+5,4t+3)$.
It is clear from the construction of the matrix $M$ that this DDD admits a regular action of the direct product of the additive group of $\GF(4t+3)$ and the additive group of $\GF(4t+1)$.
\end{proof}

\begin{thm}\label{Fano}
Let $t$ be a non-negative integer such that $4t+3$ is a prime power. Then there exists a divisible design Cayley digraph with parameters $(28t+21,8t+7,4t+3,2t+2,7,4t+3)$.
\end{thm}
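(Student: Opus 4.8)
The plan is to exhibit the digraph directly as a Cayley digraph on the abelian group $G=Z_7\times\GF(q)$ (with $\GF(q)$ taken additively and $q=4t+3$) and to verify the hypotheses of Theorem \ref{cosets-DDD}. The seven classes will be the cosets of the index-$7$ subgroup $N=\{0\}\times\GF(q)$, so that $m=7$, $n=q$ and $v=28t+21$. Let $Q\subseteq\GF(q)^{*}$ be the Paley difference set of non-zero squares, a $(4t+3,2t+1,t)$-difference set, and let $W=\{1,2,4\}\subseteq Z_7$ be the Fano difference set of non-zero squares modulo $7$. I would take
\[
S=\bigl(\{0\}\times Q\bigr)\cup\bigl(W\times(\GF(q)\setminus Q)\bigr).
\]
Then $|S|=(2t+1)+3(2t+2)=8t+7=k$, and since every element of $S$ has a non-zero coordinate, $e\notin S$.

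For the skew condition $S\cap S^{-1}=\emptyset$ I would use that $q\equiv 3\pmod 4$ and $7\equiv 3\pmod 4$, so $-1$ is a non-square in each, giving $-Q=\GF(q)^{*}\setminus Q$ and $-W=\{3,5,6\}$. The part of $S$ lying in $N$ is $\{0\}\times Q$, and $Q\cap(-Q)=\emptyset$; the part outside $N$ is supported on first coordinates $W$, whose negative $-W$ is disjoint from $W$. Hence $A+A^{t}$ is a $(0,1)$-matrix. It then remains to establish the group-ring identity $SS^{-1}=ke+\lambda_1(\underline N-e)+\lambda_2(\underline G-\underline N)$ with $\lambda_1=4t+3$, $\lambda_2=2t+2$. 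Writing $X^{(-1)}$ for the image of $X$ under $g\mapsto g^{-1}$ and $\tau=\underline{\GF(q)}-Q$ for the complementary Paley set, the difference-set property of $Q$ together with $\underline{\GF(q)}\,X=|X|\,\underline{\GF(q)}$ yields $QQ^{(-1)}=(t+1)e+t\,\underline{\GF(q)}$, $\tau\tau^{(-1)}=(t+1)e+(t+1)\underline{\GF(q)}$ and $\tau Q^{(-1)}=Q\tau^{(-1)}=(t+1)(\underline{\GF(q)}-e)$. Grouping $SS^{-1}$ by the seven cosets of $N$, the diagonal coset contributes $QQ^{(-1)}+3\,\tau\tau^{(-1)}=(4t+4)e+(4t+3)\underline{\GF(q)}$, which gives the required $k$ on $e$ and $\lambda_1$ on $N\setminus\{e\}$.

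The heart of the argument—and the step I expect to be the main obstacle—is showing that each non-diagonal coset, indexed by $\delta\in Z_7\setminus\{0\}$, contributes the same flat term $\lambda_2\,\underline{\GF(q)}$, independently of $\delta$. Collecting the relevant products shows this contribution equals
\[
\bigl(\mathbf 1_{\delta\in W}+\mathbf 1_{\delta\in -W}\bigr)(t+1)(\underline{\GF(q)}-e)+|W\cap(W+\delta)|\,\tau\tau^{(-1)} .
\]
Here both defining features of the Fano difference set are used: since $W$ is a $(7,3,1)$-difference set one has $|W\cap(W+\delta)|=1$ for every $\delta\neq 0$, and since $W$ is skew with $W\sqcup(-W)=Z_7\setminus\{0\}$ exactly one of $\delta,-\delta$ lies in $W$, so $\mathbf 1_{\delta\in W}+\mathbf 1_{\delta\in -W}=1$. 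Substituting these values collapses the display to $(t+1)(\underline{\GF(q)}-e)+(t+1)(e+\underline{\GF(q)})=(2t+2)\underline{\GF(q)}$, as desired. With the identity $SS^{-1}=ke+\lambda_1(\underline N-e)+\lambda_2(\underline G-\underline N)$ established and $N=(N\setminus\{e\})\cup\{e\}$ a subgroup, Theorem \ref{cosets-DDD} shows that $\Cay(G,S)$ is a divisible design digraph whose canonical partition consists of the cosets of $N$, with parameters $(28t+21,8t+7,4t+3,2t+2,7,4t+3)$.
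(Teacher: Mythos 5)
Your proof is correct, and in fact it verifies the very same digraph that the paper constructs, but by a genuinely different route. The paper's proof takes the matrix $D \otimes \overline{D}_1 + I_7 \otimes D_1$, where $D$ and $D_1$ are the incidence matrices of the $(7,3,1)$ and $(4t+3,2t+1,t)$ Paley designs, cites \cite[Lemma 4.4]{ddd} as a black box for the DDD property, and then gets the Cayley property by exhibiting the regular action of the additive group of $\GF(7)$ times that of $\GF(4t+3)$ and invoking Theorem \ref{regular}. Your connection set $S=(\{0\}\times Q)\cup\bigl(W\times(\GF(q)\setminus Q)\bigr)$ develops exactly this matrix over exactly this group: $I_7\otimes D_1$ corresponds to the $\{0\}\times Q$ part and $D\otimes\overline{D}_1$ to the $W\times(\GF(q)\setminus Q)$ part, since $\overline{D}_1=J-D_1$ is group-developed from the complement of $Q$ (which contains $0$). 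So the object is identical; what differs is that you prove the group-ring identity $SS^{-1}=ke+\lambda_1(\underline{N}-e)+\lambda_2(\underline{G}-\underline{N})$ from scratch and conclude via the paper's own Theorem \ref{cosets-DDD} instead of citing the external lemma. Your computations check out: $QQ^{(-1)}=(t+1)e+t\,\underline{\GF(q)}$, $\tau\tau^{(-1)}=(t+1)(e+\underline{\GF(q)})$, the mixed products $(t+1)(\underline{\GF(q)}-e)$, the coefficient $|W\cap(W+\delta)|=1$ for $\delta\neq 0$ and $3$ for $\delta=0$ in $\underline{W}\,\underline{W}^{(-1)}$, the diagonal-coset term $(4t+4)e+(4t+3)\underline{\GF(q)}$ giving $k=8t+7$ on $e$ and $\lambda_1=4t+3$ on $N\setminus\{e\}$, and the flat off-diagonal term $(2t+2)\underline{\GF(q)}$; and you correctly identify that both features of $W$ are needed, its $(7,3,1)$ difference-set property and its skewness $W\sqcup(-W)=Z_7\setminus\{0\}$ (mirroring the condition $q\equiv 3\ (mod\ 4)$ on the Paley side, which also gives $S\cap S^{-1}=\emptyset$). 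What your approach buys is a self-contained argument that simultaneously delivers skewness, the divisible-design property, the Cayley property, and the canonical partition as the cosets of $N=\{0\}\times\GF(q)$, all within one application of Theorem \ref{cosets-DDD}; what the paper's route buys is brevity and uniformity, since the same two-line template (cite a Kronecker-product lemma from \cite{ddd}, then point to the regular action and Theorem \ref{regular}) is reused across Theorems \ref{diag-gen} through \ref{Paley-circ}.
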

\begin{proof}
Let $D$ be the incidence matrix of a Paley design with parameters $(7,3,1)$, and let $D_1$ be the incidence matrix of a Paley design with parameters $4t+3,2t+1,t$. 
Further, let $\overline D_1=J_{4t+3}-D_1$. By \cite[Lemma 4.4]{ddd}, the matrix $D \otimes \overline{D}_1 + I_7 \otimes D_1$ is the adjacency matrix of a $DDD(28t+21,8t+7,4t+3,2t+2,7,4t+3)$. 
It is clear from the construction of this DDD that it admits a regular action of the direct product of the additive group of $\GF(4t+3)$ and the additive group of $\GF(7)$.
\end{proof}

\begin{thm}\label{2Paley}
Let $t_1$ and $t_2$ be non-negative integers such that $4t_1+3$ and $4t_2+3$  
are prime powers. Then there exists a divisible design Cayley digraph with parameters $((4t_1+3)(4t_2+3),(2t_1+1)(4t_2+3)+2t_2+1,(2t_1+1)(4t_2+3) + t_2,(4t_2+3) t_1 + 2t_2+1,4t_1+3,4t_2+3)$.
\end{thm}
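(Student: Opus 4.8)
The plan is to mimic the constructions in Theorems~\ref{diag-gen}--\ref{Fano}: build an explicit adjacency matrix as a sum of Kronecker products of the two Paley incidence matrices, verify it defines a DDD with the stated parameters, and then exhibit a regular automorphism group so that Theorem~\ref{regular} applies. Write $v_1=4t_1+3$ and $v_2=4t_2+3$, and let $D_1,D_2$ be the incidence matrices of the Paley designs with parameters $(v_1,2t_1+1,t_1)$ and $(v_2,2t_2+1,t_2)$, each built on the additive group of the corresponding field. I will use two structural identities repeatedly: being skew Paley incidence matrices, each $D_i$ has zero diagonal and satisfies $D_i+D_i^t=J_{v_i}-I_{v_i}$; being incidence matrices of symmetric designs, $D_iD_i^t=D_i^tD_i=(t_i+1)I_{v_i}+t_iJ_{v_i}$ and $D_iJ_{v_i}=J_{v_i}D_i=(2t_i+1)J_{v_i}$.

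The candidate adjacency matrix is
$$M=D_1\otimes J_{v_2}+I_{v_1}\otimes D_2.$$
I would first check that this is a legitimate adjacency matrix of a regular asymmetric digraph. Because $D_1$ has zero diagonal, the diagonal blocks of $D_1\otimes J_{v_2}$ vanish, so the two summands have disjoint supports and $M$ is a $(0,1)$-matrix with zero diagonal; its row and column sums equal $(2t_1+1)v_2+(2t_2+1)$, the claimed degree $k$. Skewness follows from $M+M^t=(D_1+D_1^t)\otimes J_{v_2}+I_{v_1}\otimes(D_2+D_2^t)=(J_{v_1}-I_{v_1})\otimes J_{v_2}+I_{v_1}\otimes(J_{v_2}-I_{v_2})=J_{v_1v_2}-I_{v_1v_2}$, which is a $(0,1)$-matrix, so $S\cap S^{-1}=\emptyset$ in the language of Theorem~\ref{cosets-DDD}.

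The core computation is to expand $MM^t$ (and, by the same identities, to note $MM^t=M^tM$) using $(A\otimes B)(C\otimes D)=AC\otimes BD$. Collecting the four resulting terms, with the mixed terms combining through $D_1+D_1^t=J_{v_1}-I_{v_1}$, yields
$$MM^t=(t_2+1)\,I_{v_1v_2}+\big(v_2t_1+2t_2+1\big)\,(J_{v_1}\otimes J_{v_2})+\big(v_2(t_1+1)-t_2-1\big)\,(I_{v_1}\otimes J_{v_2}).$$
This is exactly the canonical form of a symmetric divisible design relative to the partition of $\GF(v_1)\times\GF(v_2)$ into the $v_1$ classes $\{a\}\times\GF(v_2)$: reading off the coefficients gives $k-\lambda_1=t_2+1$, $\lambda_2=v_2t_1+2t_2+1$ and $\lambda_1-\lambda_2=v_2(t_1+1)-t_2-1$, i.e. $\lambda_1=(2t_1+1)v_2+t_2$ and $\lambda_2=t_1v_2+2t_2+1$, with $m=v_1=4t_1+3$ and $n=v_2=4t_2+3$. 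Since $M^tM=MM^t$, the dual design has the same parameters, so $M$ is the incidence matrix of a symmetric divisible design; being skew, it is the adjacency matrix of a DDD with precisely the stated parameters (this step parallels the construction lemmas of \cite{ddd} invoked in the previous theorems).

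Finally, for the Cayley property I would let $G=\GF(v_1)^{+}\times\GF(v_2)^{+}$ act on the vertex set $\GF(v_1)\times\GF(v_2)$ by translation. Since each $D_i$ is invariant under translation by $\GF(v_i)^{+}$ (the Paley design being a difference set), and $I_{v_1},J_{v_1},J_{v_2}$ are translation-invariant, the matrix $M$ is fixed entrywise by $G$; as $G$ acts regularly on the vertices, Theorem~\ref{regular} shows the DDD is a Cayley digraph. The main obstacle is the Kronecker-product bookkeeping in the expansion of $MM^t$: keeping the four cross-terms straight and confirming that they recombine into precisely the $I_{v_1}\otimes J_{v_2}$ and $J_{v_1}\otimes J_{v_2}$ coefficients reproducing the (rather intricate) target $\lambda_1$ and $\lambda_2$. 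Identifying the correct ansatz $M=D_1\otimes J_{v_2}+I_{v_1}\otimes D_2$ — rather than the complement-based form $D_1\otimes\overline{D}_2+I_{v_1}\otimes D_2$ used in Theorem~\ref{Fano}, whose degree does not match — is the one genuinely non-mechanical decision.
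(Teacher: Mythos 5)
Your construction is exactly the paper's: the entry-replacement description in the paper's proof (diagonal entries of $D_1$ become $D_2$, off-diagonal zeros become $O_{4t_2+3}$, ones become $J_{4t_2+3}$) is precisely $M=D_1\otimes J_{v_2}+I_{v_1}\otimes D_2$ since the skew matrix $D_1$ has zero diagonal, and you invoke the same regular group $\GF(v_1)^{+}\times\GF(v_2)^{+}$. The only difference is that the paper outsources the DDD verification to \cite[Lemma 4.5]{ddd} while you carry out the $MM^t$ computation directly (correctly, with the coefficients matching the stated $\lambda_1$ and $\lambda_2$), so this is essentially the same proof with the cited lemma inlined.
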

\begin{proof}
Let $D_1$ be the incidence matrix of a Paley design with parameters 
$(4t_1+3,2t_1+1,t_1)$ and $D_2$ be the incidence matrix of a Paley design with parameters $(4t_2+3,2t_2+1,t_2)$. Replace each diagonal entry of the matrix $D_1$ by $D_2$, 
each off-diagonal entry value 0 of $D_1$ by $O_{4t_2+3}$ and each entry value 1 of $D_1$ by $J_{4t_2+3}$.
By \cite[Lemma 4.5]{ddd}, the resulting matrix is the adjacency matrix of a $DDD((4t_1+3)(4t_2+3),(2t_1+1)(4t_2+3)+2t_2+1,(2t_1+1)(4t_2+3) + t_2,(4t_2+3) t_1 + 2t_2+1,4t_1+3,4t_2+3)$. 
The direct product of the additive group of $\GF(4t_1+3)$ and the additive group of $\GF(4t_2+3)$ acts regularly on the set of vertices of that DDD.
\end{proof}

\begin{thm}\label{Paley-circ}
Let $v$ be a prime power, $v \equiv 3\ (mod\ 4)$, and $n$ be an integer, $n \ge 3$. Then there exists a divisible design Cayley digraph with parameters 
$(vn,n \frac{v-1}{2} +1,n \frac{v-1}{2}, n \frac{v-3}{4} + 1, v,n)$.
\end{thm}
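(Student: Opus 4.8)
The plan is to modify the all-one construction of Corollary \ref{all-one} by superimposing a directed cycle inside each class; the extra arcs will raise the degree and $\lambda_2$ by $1$ while leaving $\lambda_1$ unchanged, which is exactly the gap between the target parameters and those of Corollary \ref{all-one}. Write $k=\frac{v-1}{2}$ and $\lambda=\frac{v-3}{4}$, and let $A$ be the incidence matrix of the Paley design $(v,k,\lambda)$, so that $A$ is skew with $A+A^t=J_v-I_v$ and $A^tA=AA^t=(k-\lambda)I_v+\lambda J_v$. Let $C$ be the circulant permutation matrix of the directed $n$-cycle on $Z_n$, so that $C$ has zero diagonal, unit row and column sums, $C^tC=CC^t=I_n$, and $J_nC=CJ_n=J_n$. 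Put \[ M=A\otimes J_n+I_v\otimes C. \] Since $A$ has zero diagonal, the diagonal blocks of $A\otimes J_n$ vanish, so $I_v\otimes C$ only fills those blocks and $M$ is a $(0,1)$-matrix with zero diagonal.

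Next I would confirm that $M$ is the adjacency matrix of a regular asymmetric digraph of degree $nk+1$. Regularity is immediate from the row sums, which are $nk$ for $A\otimes J_n$ and $1$ for $I_v\otimes C$. For asymmetry, $M+M^t=(A+A^t)\otimes J_n+I_v\otimes(C+C^t)=(J_v-I_v)\otimes J_n+I_v\otimes(C+C^t)$ is a $(0,1)$-matrix precisely when $C+C^t$ contains no entry equal to $2$, i.e.\ when the $n$-cycle has no repeated edge; this holds exactly for $n\ge 3$, which is where the hypothesis of the theorem is used.

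The core of the argument is the product $M^tM$. Using the listed identities the Kronecker cross terms collapse, namely $(A^t\otimes J_n)(I_v\otimes C)=A^t\otimes J_n$, $(I_v\otimes C^t)(A\otimes J_n)=A\otimes J_n$, and $(I_v\otimes C^t)(I_v\otimes C)=I_{vn}$, so that \[ M^tM=n(A^tA)\otimes J_n+(A+A^t)\otimes J_n+I_{vn}=\bigl(n(k-\lambda)-1\bigr)(I_v\otimes J_n)+(n\lambda+1)(J_v\otimes J_n)+I_{vn}, \] and the same computation gives $MM^t=M^tM$. Evaluating this matrix on the partition into the $v$ classes of size $n$ coming from the Kronecker factorization gives $nk+1$ on the diagonal, $\lambda_1=n(k-\lambda)+n\lambda=nk=n\frac{v-1}{2}$ for two vertices in a common class, and $\lambda_2=n\lambda+1=n\frac{v-3}{4}+1$ for two vertices in distinct classes, so $M$ is the adjacency matrix of a DDD with the stated parameters. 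Here the term $I_{vn}$ (from $C^tC=I_n$) raises the degree by $1$, while $(A+A^t)\otimes J_n$ lowers the $I_v\otimes J_n$ coefficient and raises the $J_v\otimes J_n$ coefficient by $1$, producing the $+1$ in $\lambda_2$ and the cancellation that keeps $\lambda_1=nk$.

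Finally I would display the regular group action. Letting $Q$ be the set of non-zero squares in $\GF(v)$, a direct check shows $M=\Cay(G,S)$, where $G$ is the direct product of the additive group of $\GF(v)$ and $Z_n$ and $S=(Q\times Z_n)\cup\{(0,1)\}$; here $|S|=\frac{v-1}{2}n+1$ is the degree, and since $-1$ is a non-square for $v\equiv 3\ (mod\ 4)$ and $n\ge 3$ forces $(0,1)\ne(0,-1)$ in $G$, we have $S\cap S^{-1}=\emptyset$. Thus $G$ acts regularly, and by Theorem \ref{regular} the digraph is a Cayley digraph; alternatively one may apply Theorem \ref{cosets-DDD} with the subgroup $\{0\}\times Z_n$, whose right cosets are precisely the $v$ classes. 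I expect the only real difficulty to be the bookkeeping in $M^tM$ — especially verifying that the four cross terms simplify as claimed — together with confirming that $n\ge 3$ is exactly the threshold that makes the digraph asymmetric.
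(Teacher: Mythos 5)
Your proposal is correct and is essentially the paper's own construction: the paper builds exactly the matrix $D\otimes J_n + I_v\otimes C$ (phrased as replacing diagonal entries of the Paley incidence matrix by $C$, off-diagonal zeros by $O_n$, and ones by $J_n$) and invokes the regular action of $\GF(v)^+\times Z_n$ via Theorem \ref{regular}. Your version merely fills in details the paper asserts without computation — the $M^tM=MM^t$ bookkeeping, the $n\ge 3$ asymmetry threshold, and the explicit connection set $S=(Q\times Z_n)\cup\{(0,1)\}$ with $S\cap S^{-1}=\emptyset$ — all of which check out.
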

\begin{proof}
Let $D$ be the incidence matrix of a Paley design with parameters $(v,\frac{v-1}{2},\frac{v-3}{4})$ and $C$ be a $n \times n$ circulant matrix $C=circ(0,1,0,\ldots,0)$.
Replace each diagonal entry of the matrix $D$ by $C$, each off-diagonal entry value 0 of $D$ by $O_n$, and each entry value 1 of $D$ by $J_n$.
The resulting matrix is the adjacency matrix of a $DDD(vn,n \frac{v-1}{2} +1,n \frac{v-1}{2}, n \frac{v-3}{4} + 1, v,n)$.
The direct product of the additive group of $\GF(v)$ and the cyclic group $Z_n$ acts regularly on the set of vertices of the constructed DDD.
\end{proof}

\subsection{Other constructions of divisible design Cayley digraphs} \label{constructions-other}

The following theorem gives a construction of a divisible design Cayley digraph from the quaternion group $Q_8$.

\begin{thm}\label{v=8}
There exists a divisible design Cayley digraph with parameters $(8,3,0,1,4,2)$.
\end{thm}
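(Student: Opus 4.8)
The plan is to realise the required digraph as a Cayley digraph of the quaternion group $Q_8=\{\pm 1,\pm i,\pm j,\pm k\}$ and to verify the hypotheses of Theorem \ref{cosets-DDD}. Since a DDD with these parameters must have degree $k=3$ and $m=4$ classes of size $n=2$, the natural candidate for the subgroup producing the canonical partition is the centre $H=\{1,-1\}$, which is the unique subgroup of order $2$; its four right cosets $\{1,-1\}$, $\{i,-i\}$, $\{j,-j\}$, $\{k,-k\}$ are to serve as the $m=4$ classes of size $n=2$.

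First I would fix the connection set $S=\{i,j,k\}$. Two immediate checks are that $|S|=3=k$ and that $S^{-1}=\{-i,-j,-k\}$, so $S\cap S^{-1}=\emptyset$; the latter guarantees that the adjacency matrix of $\Cay(Q_8,S)$ is skew, as required for a DDD.

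The key computational step is to evaluate $SS^{-1}$ in the group ring. Using the multiplication rules of $Q_8$ (for instance $i(-j)=-k$, $i(-k)=j$, $j(-i)=k$, and so on), I would check that the three products $ss^{-1}$ contribute $3e$, while the six products $st^{-1}$ with $s\neq t$ are exactly $\pm i,\pm j,\pm k$, each occurring once. This gives
\[
SS^{-1}=3e+\bigl(i+(-i)+j+(-j)+k+(-k)\bigr),
\]
so that in the notation of Theorem \ref{cosets-DDD} one has $SS^{-1}=aA+bB+ke$ with $k=3$, $a=0$, $A=\{-1\}$, $b=1$ and $B=\{\pm i,\pm j,\pm k\}$. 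The crucial feature is that the central involution $-1$ occurs with coefficient $0$, meaning that two vertices in the same coset of $H$ have no common dominator (nor common dominatee); this is precisely what forces the within-class parameter $\lambda_1=0$.

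Finally, since $A\cup\{e\}=\{1,-1\}=H$ is a subgroup of $Q_8$, Theorem \ref{cosets-DDD} applies and shows that $\Cay(Q_8,S)$ is a DDD whose canonical partition consists of the right cosets of $H$; reading off the parameters yields $v=8$, $k=3$, $\lambda_1=a=0$, $\lambda_2=b=1$, $m=4$, $n=2$. I expect the main obstacle to be the search for a suitable $S$ rather than the verification: because $Q_8$ is non-abelian the identity for $SS^{-1}$ must be computed by hand, and one must arrange simultaneously that $S\cap S^{-1}=\emptyset$, that $SS^{-1}$ assumes only two distinct off-identity coefficients, and that the value $a$ attached to the subgroup $H$ is $0$ and not $1$ (otherwise either the roles of $\lambda_1,\lambda_2$ or the subgroup condition would fail). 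As a safeguard I would also confirm, either directly from the out-neighbourhoods or by computing $S^{-1}S$, that the common-in-neighbour and common-out-neighbour counts coincide, so that both halves of the DDD condition hold.
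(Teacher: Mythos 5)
Your proposal is correct and matches the paper's proof essentially verbatim: the paper also takes $G=Q_8$ with $S=\{i,j,k\}$, computes $SS^{-1}=3e+\bigl(i+(-i)+j+(-j)+k+(-k)\bigr)$ with the central involution $\bar e=-1$ occurring zero times, and applies Theorem \ref{cosets-DDD} with the subgroup $\{e,\bar e\}$ to obtain the canonical partition into the four cosets of size $2$. The only cosmetic difference is that the paper labels the six-element set as $A$ (coefficient $1$) and $\{\bar e\}$ as $B$ (coefficient $0$), the mirror image of your labelling, which the theorem's symmetry in $A$ and $B$ makes immaterial; your extra check that $S^{-1}S$ behaves identically is a sensible precaution the paper leaves implicit.
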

\begin{proof}
Let $G= \langle \bar{e}, i, j, k \ | \ \bar{e}^2= e, i^2=j^2=k^2=ijk=\bar{e}^2 \rangle$ be the quaternion group $Q_8$ and $S= \{i,j,k \}$. Then $S \cap S^{-1}= \emptyset$ and
$SS^{-1}=A+0B+3e$, where $A= \{i, i^{-1}, j, j^{-1}, k, k^{-1} \}$ and $B= \{\bar{e}\}$. Obviously, $B \cup \{ e \}$ is a subgroup of $G$, so $\Cay(G,S)$ is a DDD and 
the right cosets of $B$ give a canonical partition of a divisible design Cayley digraph with parameters $(8,3,0,1,4,2)$.
\end{proof}

\begin{thm}\label{cyclic}
For every odd integer $n$, $n \ge 3$, there exists a divisible design Cayley digraph with parameters $(4n,n+2,n-2,2,4,n)$.
\end{thm}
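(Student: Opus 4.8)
The plan is to realise the digraph directly as a Cayley digraph and apply Theorem \ref{cosets-DDD}. Since $n$ is odd I would work in the cyclic group $G=Z_4\times Z_n\cong Z_{4n}$ of order $4n$, whose subgroup $H=\{0\}\times Z_n$ has index $4$; its four cosets $\{i\}\times Z_n$ $(i\in Z_4)$ are exactly the $m=4$ classes of size $n$. By Theorem \ref{cosets-DDD} it then suffices to produce a set $S\subseteq G$ with $|S|=n+2$, $S\cap S^{-1}=\emptyset$, and
$$SS^{-1}=2A+(n-2)B+(n+2)e,\qquad A=G\setminus H,\quad B=H\setminus\{e\},$$
because $B\cup\{e\}=H$ is a subgroup. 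The parameters then read off as $(4n,n+2,n-2,2,4,n)$, with $\lambda_2=2$, $\lambda_1=n-2$, $k=n+2$.

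To locate $S$ I would first fix its distribution among the cosets. Setting $s_i=|S\cap(\{i\}\times Z_n)|$ and projecting the required difference equation onto $G/H\cong Z_4$ forces $(s_0+s_2)(s_1+s_3)=2n$, $s_0s_2+s_1s_3=n$ and $\sum_i s_i=n+2$; hence $\{s_0+s_2,\,s_1+s_3\}=\{2,n\}$ and the only solution compatible with $S\cap S^{-1}=\emptyset$ (a self-paired coset cannot carry $n-1$ elements, which excludes cosets $0$ and $2$) is that one of $\{1\}\times Z_n$, $\{3\}\times Z_n$ carries $n-1$ elements while the other three cosets carry a single element each. A short character computation on $Z_4\times Z_n$—demanding $|\chi(S)|=n-2$ for the three nontrivial characters trivial on $H$ and $|\chi(S)|=2$ for all characters nontrivial on $H$, i.e. that the four coset-entries form a flat-spectrum sequence over $Z_4$—pins down the remaining freedom and yields
$$S=\big(\{1\}\times(Z_n\setminus\{0\})\big)\cup\{(0,1),\,(2,1),\,(3,0)\}.$$

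With $S$ in hand the verification is elementary. One checks $e=(0,0)\notin S$ and $|S|=(n-1)+3=n+2$, and that no element of $S$ inverts another: the only possible clash is between $\{1\}\times(Z_n\setminus\{0\})$ and the singleton $(3,0)$, and it does not occur since $(3,0)^{-1}=(1,0)\notin S$; thus $S\cap S^{-1}=\emptyset$ and $\Cay(G,S)$ is asymmetric with skew adjacency matrix. The heart of the argument is to compute $SS^{-1}$ layer by layer according to the coset difference $i-j\pmod 4$. In the layer $i=j$ the three singleton cosets each contribute $e$ and the term $S_1S_1^{-1}$ contributes $(n-1)e$ together with $(n-2)h$ for every $h\in H\setminus\{e\}$ (the difference multiset of $Z_n\setminus\{0\}$ hits each nonzero element $n-2$ times), giving the coefficients $n+2$ on $e$ and $n-2$ on $H\setminus\{e\}$. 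In each of the three nontrivial layers the two ``almost full'' contributions coming from the large coset combine with the two singletons landing there so that every element of that coset is covered exactly twice, yielding the coefficient $2$ throughout. This is precisely $SS^{-1}=2A+(n-2)B+(n+2)e$, and Theorem \ref{cosets-DDD} completes the proof.

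The one genuinely delicate step is the last: guaranteeing that in the coset-layers involving the large set the three singletons plug exactly the ``missing'' values, so that the count is the constant $2$ rather than $2$ with sporadic exceptions. This is what the flat-spectrum condition over $Z_4$ enforces, and it is what dictates the specific alignment of the singletons at $(0,1),(2,1),(3,0)$; the remaining bookkeeping is routine. I would present the construction together with the direct computation of $SS^{-1}$, mentioning the character viewpoint only as the motivation for the choice of $S$.
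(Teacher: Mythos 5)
Your proof is correct, and in fact you have rebuilt the paper's object from the other end. The paper's proof exhibits the $4\times 4$ block matrix with first block row $(C_n,\overline{I}_n,C_n,I_n)$; that matrix is block-circulant over $Z_4$ with circulant blocks over $Z_n$, and reading off which group elements carry a $1$ in the first row gives precisely your connection set $S=\bigl(\{1\}\times(Z_n\setminus\{0\})\bigr)\cup\{(0,1),(2,1),(3,0)\}$ in $Z_4\times Z_n$. So the digraph is literally the same; what differs is the verification. The paper outsources the divisible design property to \cite[Theorem 4.6]{ddd} and obtains the Cayley property from Theorem \ref{regular} via the regular action of $Z_4\times Z_n$, whereas you obtain both at once from Theorem \ref{cosets-DDD} by a self-contained group-ring computation; I checked your layer-by-layer count and it does give $SS^{-1}=(n+2)e+(n-2)(H\setminus\{e\})+2(G\setminus H)$ exactly as claimed (in layer $1$ the two almost-full contributions miss $(1,-1)$ and $(1,1)$, which the singleton products $(3,0)(2,1)^{-1}$ and $(0,1)(3,0)^{-1}$ supply; layers $2$ and $3$ work analogously), and $H=\{0\}\times Z_n$ being a subgroup identifies the canonical partition explicitly, which the citation-based proof leaves implicit. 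Two small remarks. First, your coset-distribution and character discussion is motivation only, and its uniqueness claim does use oddness: for even $n$ the projected equations $s_1+s_3=n$, $s_1s_3=n$ can have integer solutions (e.g.\ $s_1=s_3=2$ when $n=4$), so do not present the distribution $(1,n-1,1,1)$ as forced in general; the theorem's hypothesis $n$ odd covers you here. Second, and conversely, your actual verification never uses oddness of $n$ --- it needs only $1\neq -1$ in $Z_n$, i.e.\ $n\geq 3$ --- so the same $S$ produces a divisible design Cayley digraph with parameters $(4n,n+2,n-2,2,4,n)$ on the then non-cyclic group $Z_4\times Z_n$ for even $n$ as well (proper except for $n=4$, where $\lambda_1=\lambda_2=2$ and the digraph degenerates to a skew symmetric $(16,6,2)$ design). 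This is a modest strengthening that neither the statement nor the paper's proof claims, and it would be worth recording alongside your argument.
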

\begin{proof}
Let $C_n=circ(0,1,0, \ldots , 0)$ be a $n \times n$ matrix and let $\overline I_n= J_n-I_n$. Then
\begin{displaymath}
  \left[
\begin{tabular}{cccc}
$C_n$             &  $\overline I_n$   & $C_n$             & $I$              \\
$I$               & $C_n$              & $\overline I_n$   & $C_n$            \\
$C_n$             & $I$                & $C_n$             & $\overline I_n$  \\
$\overline I_n$   & $C_n$              & $I$               & $C_n$            \\
\end{tabular}                 \right]
\end{displaymath}
is the adjacency matrix of a $DDD$ with parameters $(4n,n+2,n-2,2,4,n)$ (see \cite[Theorem 4.6]{ddd}).
The direct product of the cyclic groups $Z_n$ and $Z_4$  acts regularly on that DDD.
\end{proof}

In the following theorem we give parameters of divisible design Cayley digraphs for $v \le 27$, $0 < \lambda_2 < k$ and $\lambda_1 < k$, whose existence were established using a computer.

\begin{thm}\label{comp}
There exist divisible design Cayley digraphs with the following parameters:
\begin{displaymath}   
\begin{tabular}{l l l l}
$(16,7,0,3,8,2)$ \hspace{0.5cm}  & $(24,7,0,2,8,3)$ \hspace{0.5cm}  & $(24,6,2,1,3,8)$ \hspace{0.5cm}  & $(26,9,0,3,13,2)$ \\
$(16,4,0,1,4,4)$ \hspace{0.5cm}  & $(24,5,0,1,6,4)$  \hspace{0.5cm}  & $(25,5,0,1,5,5)$ \hspace{0.5cm} & $(27,9,0,3,9,3)$ \\
$(24,11,0,5,12,2)$ \hspace{0.5cm} & $(24,8,4,2,4,6)$ &  &  \\
\end{tabular}                
\end{displaymath}
\end{thm}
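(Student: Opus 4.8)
The plan is to prove each row of the table by exhibiting an explicit Cayley digraph and invoking the characterization of Theorem~\ref{cosets-DDD}. Concretely, for a target tuple $(v,k,\lambda_1,\lambda_2,m,n)$ I would search for a group $G$ of order $v$, a subgroup $H \le G$ of order $n$, and a connection set $S \subseteq G \setminus \{e\}$ with $|S|=k$ and $S \cap S^{-1} = \emptyset$ such that, in the group ring $\mathbb{Z}[G]$,
$$SS^{-1} = a\,A + b\,B + k\,e,$$
where $\{A, B, \{e\}\}$ is a partition of $G$, $\{a,b\} = \{\lambda_1,\lambda_2\}$, and one of $A \cup \{e\}$ or $B \cup \{e\}$ equals $H$. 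By Theorem~\ref{cosets-DDD} such a pair $(G,S)$ produces a DDD with parameters $(v,k,\lambda_1,\lambda_2,m,n)$ whose canonical partition is given by the $m$ cosets of $H$, and since it is a Cayley digraph it carries by construction the regular action of $G$ on its vertices (Theorem~\ref{regular}). Thus a single triple $(G,H,S)$ satisfying the displayed identity is a complete existence witness for one entry of the table.

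Next I would organize the search. Groups of order $v \le 27$ are taken from the small groups library, and for each $G$ I enumerate its subgroups $H$ of order $n$ as candidate class-defining subgroups; the requirement that the cosets of $H$ be the blocks of the partition forces me to test the identity above with $A \cup \{e\} = H$, and symmetrically with $B \cup \{e\} = H$. Checking that a given $S$ works reduces to computing the multiset $SS^{-1}$ and verifying that its coefficient function is constant and equal to one $\lambda$-value on $H \setminus \{e\}$, and constant and equal to the other on $G \setminus H$; this is a purely finite verification in $\mathbb{Z}[G]$. To make the search feasible I would use $S \cap S^{-1} = \emptyset$ to restrict $S$ to at most one element from each inverse pair, build $S$ element by element while maintaining the partial sum $SS^{-1}$ and backtracking as soon as some coefficient exceeds its target value, and finally factor out the action of the setwise stabiliser of $H$ in $\mathrm{Aut}(G)$, since automorphisms fixing $H$ carry solutions to isomorphic solutions.

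The main obstacle is the size of the search space: $\binom{v-1}{k}$ candidate sets is already enormous for the largest entries (for instance $v=27$, $k=9$), so naive enumeration is hopeless and everything depends on aggressive pruning. The backtracking bound just described is the decisive tool, because a target $\lambda$ equal to $0$ (as in several rows, e.g.\ $(16,7,0,3,8,2)$ or $(27,9,0,3,9,3)$) forces the corresponding coefficients of $SS^{-1}$ to vanish, which prunes the search tree very early; the rows with both $\lambda_1,\lambda_2 > 0$ are the genuinely expensive ones. Once a witness $(G,H,S)$ is located for each row---which the computation does---the argument is complete, since verifying the group-ring identity for that fixed triple is routine and Theorem~\ref{cosets-DDD} supplies the rest. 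The same exhaustive search, run to completion rather than stopped at the first witness, is what underlies the nonexistence statements of Theorem~\ref{thm-nonex}.
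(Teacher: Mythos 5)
Your proposal matches the paper's approach: the paper gives no written proof for Theorem~\ref{comp}, stating only that existence was established by computer (using Magma and GAP), with the authors noting explicitly that Theorem~\ref{cosets-DDD} was the tool used to construct divisible design Cayley digraphs in these searches. Your reconstruction---enumerating groups $G$ of order $v$, subgroups $H$ of order $n$, and skew connection sets $S$ certified via the group-ring identity $SS^{-1}=aA+bB+ke$ of Theorem~\ref{cosets-DDD}---is exactly this method, and the witnessing groups you would find are those recorded in Table~\ref{DDCDs}.
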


\bigskip

A balanced generalized weighing matrix BGW$(v,k, \lambda)$ over a multiplicative group $G$ is a $v \times v$ matrix $W=[g_{ij}]$ with entries from $\overline{G}=G \cup \{ 0 \}$
such that each row of $W$ contains exactly $k$ nonzero entries, and for every $a,b \in \{ 1,2, \ldots ,v \}$, $a \neq b$, the multi-set 
$\{ g_{aj} g_{bj}^{-1} | \ 1 \le j \le v, \ g_{aj} \neq 0, \ g_{bj} \neq 0 \}$ contains exactly $\lambda / |G|$ copies of each element of $G$. 

In this work we concentrate only on balanced generalized weighing matrices BGW$(n+1,n,n-1)$ over a cyclic group $C_{n-1}$ of order $n-1$, for $n$ a prime power. Such a balanced generalized weighing matrix $W$ has zero diagonal entries and if the entries of $W$ are assumed to belong to the finite field GF$(n)$, then $W^t=-W$, and thus we call it to be {\it skew}, see \cite{ik}. 

\begin{thm}
 There is a divisible design  digraph with parameters $$(n^2-1,n,0,1,n+1,n-1),$$ for each $n$ an odd prime power. 
\end{thm}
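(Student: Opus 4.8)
The plan is to construct the digraph by \emph{blowing up} the skew weighing matrix $W=\mathrm{BGW}(n+1,n,n-1)$ over $C_{n-1}$ provided above, using the regular representation of $C_{n-1}$. Writing $C_{n-1}=\GF(n)^{\ast}$ and letting $\phi$ be its regular representation (a homomorphism, since $C_{n-1}$ is abelian), each $g$ is sent to an $(n-1)\times(n-1)$ permutation matrix $\phi(g)$ with $\phi(g)\phi(h)=\phi(gh)$, $\phi(g)^{t}=\phi(g^{-1})$ and $\sum_{g\in C_{n-1}}\phi(g)=J_{n-1}$; set $\phi(0)=O_{n-1}$. Replacing every entry $g_{ij}$ of $W$ by $\phi(g_{ij})$ yields a $(0,1)$-matrix $M$ of order $(n+1)(n-1)=n^{2}-1$. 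Since $W$ has zero diagonal and exactly $n$ nonzero entries in each row and column, $M$ has zero diagonal and constant row and column sum $n$, and the $n+1$ index blocks of size $n-1$ will be the $m=n+1$ classes.

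Next I would verify the design identity by computing $MM^{t}$ and $M^{t}M$ blockwise. The $(i,j)$ block of $MM^{t}$ is $\sum_{l}\phi(g_{il})\phi(g_{jl}^{-1})=\sum_{l}\phi(g_{il}g_{jl}^{-1})$, summed over columns $l$ where rows $i,j$ of $W$ are both nonzero. For $i=j$ each term is $\phi(1)$, giving $nI_{n-1}$; for $i\neq j$ the defining property of $W$ says the ratios $g_{il}g_{jl}^{-1}$ run once through $C_{n-1}$, so the block is $\sum_{g}\phi(g)=J_{n-1}$, whence $MM^{t}=nI_{n^{2}-1}+(J_{n+1}-I_{n+1})\otimes J_{n-1}$. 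The skew relation $W^{t}=-W$ is exactly what lets me reduce $M^{t}M$ to the same matrix: its $(i,j)$ block is $\sum_{l}\phi(g_{li}^{-1}g_{lj})$, and using $g_{li}=-g_{il}$, $g_{lj}=-g_{jl}$ the two signs cancel to give $\phi(g_{il}^{-1}g_{jl})=\phi\big((g_{il}g_{jl}^{-1})^{-1}\big)$, whose arguments again exhaust $C_{n-1}$ once. Thus $M^{t}M=MM^{t}=nI_{n^{2}-1}+(J_{n+1}-I_{n+1})\otimes J_{n-1}$, so any two distinct vertices have $0$ common in- and out-neighbours when they lie in the same block and exactly $1$ when they lie in different blocks, matching $\lambda_{1}=0$ and $\lambda_{2}=1$.

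The remaining point, and the one I expect to be the genuine obstacle, is that $M$ must be the adjacency matrix of an \emph{asymmetric} digraph, i.e. $M+M^{t}$ must be a $(0,1)$-matrix. The $(i,j)$ block of $M+M^{t}$ is $\phi(g_{ij})+\phi(g_{ji}^{-1})=\phi(g_{ij})+\phi(-g_{ij}^{-1})$, and two permutation matrices $\phi(a),\phi(b)$ have disjoint support precisely when $a\neq b$; hence skewness is equivalent to $g_{ij}^{2}\neq -1$ for every entry of $W$. When $n\equiv 3\pmod 4$ this is automatic, since $-1$ is a nonsquare in $\GF(n)$ and no such $g_{ij}$ exists, and the construction goes through at once (for $n=3$ it reproduces the digraph of Theorem \ref{v=8}). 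When $n\equiv 1\pmod 4$, however, $\GF(n)$ contains square roots of $-1$, and they cannot always be avoided inside a BGW over $C_{n-1}$ (already for $n=5$ every such $W$ is forced to use one, so the blow-up alone is \emph{not} asymmetric). For those $n$ I would instead invoke Theorem \ref{cosets-DDD}, realizing the digraph as a Cayley digraph on the cyclic group $\GF(n^{2})^{\ast}$ of order $n^{2}-1$ with connection set an affine difference set $R$ of order $n$ relative to the subgroup $\GF(n)^{\ast}$ of order $n-1$, for which $RR^{-1}=ne+\big(\GF(n^{2})^{\ast}\setminus\GF(n)^{\ast}\big)$ gives $\lambda_{1}=0$, $\lambda_{2}=1$ automatically. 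Establishing that such an $R$ can be chosen with $R\cap R^{-1}=\emptyset$ for every odd prime power $n$ is the crux of the argument.
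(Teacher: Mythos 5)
Your blow-up of the skew $\mathrm{BGW}(n+1,n,n-1)$ and the blockwise verification that $MM^{t}=M^{t}M=nI_{n^{2}-1}+(J_{n+1}-I_{n+1})\otimes J_{n-1}$ are correct and are exactly the design computation underlying the paper's construction; you have also correctly located the real obstacle, namely that the plain regular-representation blow-up $g_{ij}\mapsto\phi(g_{ij})$ is skew only when no entry of $W$ satisfies $g_{ij}^{2}=-1$, which genuinely fails for $n\equiv 1\pmod 4$ (your $n=5$ analysis is right). But your treatment of the case $n\equiv 1\pmod 4$ is a genuine gap: you replace the proof by the assertion that a relative (affine) difference set $R$ in $\GF(n^{2})^{*}$ relative to $\GF(n)^{*}$ can be chosen with $R\cap R^{-1}=\emptyset$, and you explicitly leave this --- by your own account ``the crux'' --- unproved. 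As it stands, your argument establishes the theorem only for $n\equiv 3\pmod 4$. (Note also that the statement asks only for a DDD, not a Cayley digraph; the Cayley property is the paper's separate, still open, conjecture, so your fallback route attempts more than is needed while proving less.)

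The paper closes exactly this gap with a small twist you missed: instead of replacing $w_{ij}=\omega^{e_{ij}}$ by $U^{e_{ij}}$, where $U=\mathrm{circ}(0,1,0,\ldots,0)$ generates the regular representation of $C_{n-1}$, replace it by $U^{e_{ij}}R_{n-1}$, with $R_{n-1}$ the back-diagonal identity. Since $R_{n-1}U^{a}R_{n-1}=U^{-a}$ and $R_{n-1}^{2}=I_{n-1}$, each block $U^{a}R_{n-1}$ is a \emph{symmetric} permutation matrix, the $R$-factors cancel in pairs, and all of your computations of $MM^{t}$ and $M^{t}M$ go through verbatim. Skewness then takes a different and unconditional form: the $(i,j)$ block of $M+M^{t}$ is $U^{e_{ij}}R_{n-1}+U^{e_{ji}}R_{n-1}$, and two matrices $U^{a}R_{n-1}$, $U^{b}R_{n-1}$ have disjoint support iff $U^{a-b}$ has no fixed point, i.e.\ iff $a\not\equiv b\pmod{n-1}$. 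Here $w_{ji}=-w_{ij}$ gives $e_{ji}=e_{ij}+\frac{n-1}{2}\not\equiv e_{ij}\pmod{n-1}$, so skewness holds for \emph{every} odd prime power $n$, with no condition on square roots of $-1$. This one-line modification makes your first construction work uniformly and renders the unproven difference-set detour unnecessary.
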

\begin{proof}
 Let $W=[w_{ij}]$ be a skew BGW$(n+1,n,n-1)$ over the cyclic group $C_{n-1}$ generated by the circulant matrix $U=circ(0,1,0,\cdots,0)$ of order $n-1$. Then the matrix  $[w_{ij}R_{n-1}]$, where $R_{n-1}$ is the back diagonal identity matrix of order $n-1$ is the desired digraph.
\end{proof}

In all cases that we have checked by computer  
these BGWs produce divisible design Cayley digraphs with parameters $(n^2-1,n,0,1,n+1,n-1)$. That leads us to the following conjecture.

\begin{con}
Let $n$ be an odd prime power. Then there exists a divisible design Cayley digraph with parameters $(n^2-1,n,0,1,n+1,n-1)$.
\end{con}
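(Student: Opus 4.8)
The plan is to reduce the conjecture, via Theorem \ref{cosets-DDD}, to the construction of a single combinatorial object---a skew relative difference set---and then to build that object inside the multiplicative group of $\GF(n^2)$. By Theorem \ref{cosets-DDD}, to obtain a divisible design Cayley digraph with parameters $(n^2-1,n,0,1,n+1,n-1)$ it suffices to exhibit a group $G$ of order $n^2-1$, a subgroup $N\le G$ with $|N|=n-1$, and a set $S\subseteq G$ with $|S|=n$ such that $S\cap S^{-1}=\emptyset$ and, in the group ring, $SS^{-1}=0\cdot A+1\cdot B+n\,e$ with $A=N\setminus\{e\}$ and $B=G\setminus N$. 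Here $A\cup\{e\}=N$ is the required subgroup, the disjointness $S\cap S^{-1}=\emptyset$ is exactly the condition ensuring that the adjacency matrix of $\Cay(G,S)$ is skew, and the canonical partition is the set of cosets of $N$. Thus $S$ must be a relative difference set with $\lambda_1=0$, $\lambda_2=1$ relative to $N$, subject to the extra \emph{skew} constraint.

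Next I would propose the natural algebraic candidate. Take $G=\GF(n^2)^{\ast}$, cyclic of order $n^2-1$, and $N=\GF(n)^{\ast}$, its unique subgroup of order $n-1$. A first guess is the trace level set $S=\{x\in\GF(n^2):\mathrm{Tr}(x)=1\}$, where $\mathrm{Tr}(x)=x+x^{n}$; it has exactly $n$ elements, none equal to $0$. The relative difference set property is then verified by linear algebra over $\GF(n)$: since $\mathrm{Tr}$ is $\GF(n)$-linear, $s_1/s_2\in N$ forces $s_1=s_2$, which gives $\lambda_1=0$; and for any $g\in G\setminus N$ (so $g\notin\GF(n)$) the functionals $x\mapsto\mathrm{Tr}(x)$ and $x\mapsto\mathrm{Tr}(gx)$ are $\GF(n)$-independent, so a representation $g=s_1/s_2$ with $s_1,s_2\in S$ is unique when it exists. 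A counting check, $|S|(|S|-1)=n(n-1)=|G\setminus N|$, upgrades this to $\lambda_2=1$ and yields the desired group-ring identity.

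The hard part will be the skew condition $S\cap S^{-1}=\emptyset$, and this is where I expect the real difficulty to lie. Using $\mathrm{Tr}(s^{-1})=\mathrm{Tr}(s)/\mathrm{Nm}(s)$ with $\mathrm{Nm}(s)=s^{\,n+1}$, one computes that $s\in S\cap S^{-1}$ precisely when $\mathrm{Tr}(s)=1$ and $\mathrm{Nm}(s)=1$, i.e. when $s$ is a root of $x^2-x+1$. Hence the trace set is never skew: its self-paired elements are the primitive sixth roots of unity, which for odd $n$ always lie in $\GF(n^2)$. The naive candidate must therefore be modified. My plan to overcome this is to exploit the hypothesis that $n$ is odd: the unique involution $-1$ of $G$ lies in $N=\GF(n)^{\ast}$, so $G\setminus N$ is a disjoint union of inverse-pairs, and since $S$ meets exactly $n$ of the $n+1$ cosets of $N$ once each, I would seek a transversal realizing \emph{half} of this pairing---for instance by refining the trace condition with a quadratic (square/nonsquare) condition chosen so that $S$ and $S^{-1}$ become disjoint while the difference equation above is preserved.

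As an alternative route, closer to the theorem preceding the conjecture, I would try to show directly that the matrix $[w_{ij}R_{n-1}]$ built from the skew $\mathrm{BGW}(n+1,n,n-1)$ has an automorphism group containing a regular subgroup, and then invoke Theorem \ref{regular}. The within-block cyclic shift supplies a copy of $C_{n-1}$, and a cyclic development of the $\mathrm{BGW}$ on its $n+1$ indices should supply a $C_{n+1}$ permuting the blocks; the task is to combine these into a group of order $n^2-1$ acting regularly on the vertices, checking compatibility with the back-diagonal twist $R_{n-1}$. In either approach the essential obstacle is the same, namely securing the asymmetry (the skew/disjointness condition) uniformly in $n$ rather than merely for the small cases verified by computer, which is presumably why the statement is at present only conjectured.
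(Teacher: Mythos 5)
You should first be clear about the status of the statement: it is a \emph{conjecture} in the paper, not a theorem. The authors prove only that a DDD (not necessarily Cayley) with parameters $(n^2-1,n,0,1,n+1,n-1)$ exists, via skew $\mathrm{BGW}(n+1,n,n-1)$ matrices, and they verify the Cayley property only by computer in small cases. Your text, as you yourself acknowledge at the end, is likewise not a proof: both of your routes stop exactly where the open problem lives. What you do establish is fine -- the reduction via Theorem \ref{cosets-DDD} to a skew relative difference set is correct, and your analysis of the trace level set $S=\{x\in\GF(n^2):\mathrm{Tr}(x)=1\}$ (the classical $(n+1,n-1,n,1)$-RDS in $\GF(n^2)^{\ast}$) is sound, including the computation that $S\cap S^{-1}$ consists of the roots of $x^2-x+1$ (in characteristic $3$ this root is $-1$ rather than a primitive sixth root of unity, but your conclusion that $S$ is never skew stands). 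The genuine gap is the proposed repair: the ``quadratic refinement'' is never instantiated, and, more seriously, no modification inside the group you fixed can work uniformly. For $n=3$ the paper's own Table \ref{DDCDs} shows that the unique DDCD with parameters $(8,3,0,1,4,2)$ admits only $Q_8$ as a regular group; hence there is \emph{no} skew relative difference set in the cyclic group $\GF(9)^{\ast}\cong Z_8$ at all (one can check directly that every such RDS in $Z_8$ relative to the subgroup of order $2$ is a translate of $\pm\{0,1,3\}$, none of which is skew). So any successful construction must let the group vary with $n$ -- e.g.\ $Q_8$ for $n=3$, while $Z_{24}$, $Z_3:Z_8$ and $SL(2,3)$ all occur for $n=5$ -- which your cyclic-group program does not anticipate.

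Your alternative route also contains a concrete false step: the within-block shifts $(i,x)\mapsto(i,x+c)$ are in general \emph{not} automorphisms of the matrix $[w_{ij}R_{n-1}]$. Each nonzero block $U^{a}R_{n-1}$ is back-circulant, with arc relation $y\equiv -x-a \pmod{n-1}$; conjugating by the shift replaces $a$ by $a+2c$, so only the shifts with $2c\equiv 0 \pmod{n-1}$ survive, not a full copy of $C_{n-1}$. A regular group of order $n^2-1$ would have to combine block permutations with block-dependent shifts and negations compatible with the $\mathrm{BGW}$ structure, and exhibiting such a group is precisely the unresolved content of the conjecture. In short: your reduction and your diagnosis of the obstruction are correct and match the state of the art reflected in the paper, but neither route is carried to completion, and the one concrete construction you fix in advance (a skew RDS in $\GF(n^2)^{\ast}$) is refuted already at $n=3$ by the paper's classification.
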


\section{Small parameters} \label{small}

All putative parameter sets $(v, k, \lambda_1, \lambda_2, m, n)$ for DDDs on at most 27 vertices are given in \cite{ddd}. 
In {\rm T}ables \ref{parameters1}, \ref{parameters2} and \ref{parameters3} we give the information of the existence of DDDs on at most 27 vertices and the existence of the divisible design Cayley digraphs.
We also give the number of divisible design Cayley digraphs with certain parameters, up to isomorphism.
The cases when $\lambda_1=k$ or $\lambda_2=0$ are omitted.
Examples of divisible design Cayley digraphs with $\lambda_1=k$ or $\lambda_2=0$ are given in Theorem \ref{diag-gen} and Theorem \ref{all-one-gen}.

\begin{rem}
The existence of DDDs with parameters $(24,7,0,2,8,3)$, $(24,8,4,2,4,6)$, $(24,6,2,1,3.8)$, $(26,9,0,3,13,2)$ and $(27,9,0,3,9,3)$ were all open problems in (see \cite{ddd}).
Hence, by constructing divisible design Cayley digraphs with parameters $(24,7,0,2,8,3)$, $(24,8,4,2,4,6)$, $(24,6,2,1,3,8)$, $(26,9,0,3,13,2)$ and $(27,9,0,3,9,3)$ we proved the existence of DDDs
with these parameters.
\end{rem}

\begin{table}[H] {\scriptsize 
\caption{\footnotesize Feasible parameters of proper $DDDs$ with $v \le 20$, $0 < \lambda_2 < k$, $\lambda_1 < k$}
\label{parameters1}
\begin{center}
\begin{tabular}{r r r r r r | l l | l l r}
$v$ & $k$  & $\lambda_1$ & $\lambda_2$ & $m$ & $n$ & existence & reference 
& Cayley & reference & $\#$ DDCD
\\
\hline\hline
8 & 3 & 0 & 1 & 4 & 2 & yes & \cite{ddd} & yes & Theorem \ref{v=8} & 1 \\
9 & 4 & 3 & 1 & 3 & 3 & yes & \cite{ddd} & yes & Theorem \ref{2Paley}, Theorem \ref{Paley-circ} & 1\\
9 & 4 & 0 & 2 & 3 & 3 & no  & \cite{ddd} & no  &  & 0\\
9 & 3 & 0 & 1 & 3 & 3 & yes & \cite{ddd} & yes & Theorem \ref{square} & 1\\
12 & 5 & 1 & 2 & 4 & 3 & yes & \cite{ddd} & yes & Theorem \ref{cyclic} & 2 \\
12 &  5 &  4 & 1 &  3 &  4 & yes & \cite{ddd} & yes & Theorem \ref{Paley-circ} & 1\\
12 &  5 &  0 & 2 &  6 &  2 & no  & \cite{ddd} & no & & 0\\
12 &  4 &  2 & 1 &  6 &  2 & yes & \cite{ddd} & no & Theorem \ref{thm-nonex} & 0\\
12 &  4 &  0 & 2 &  2 &  6 & no  & \cite{ddd} & no &  & 0\\
12 &  3 &  0 & 1 &  2 &  6 & no  & \cite{ddd} & no &  & 0\\
14 &  4 &  0 & 1 &  7 &  2 & no  & \cite{ddd} & no & & 0\\
14 &  5 &  1 & 2 &  2 &  7 & no  & \cite{ddd} & no & & 0\\
15 &  6 &  3 & 2 &  5 &  3 & no  & \cite{ddd} & no & & 0\\
15 &  5 &  4 & 1 &  5 &  3 & no  & \cite{ddd} & no & & 0\\
15 &  4 &  0 & 1 &  5 &  3 & yes & \cite{ddd} & yes & Theorem \ref{3-5} & 10\\
15 &  6 &  0 & 3 &  3 &  5 & no  & \cite{ddd} & no &  & 0\\
15 &  6 &  5 & 1 &  3 &  5 & yes & \cite{ddd} & yes & Theorem \ref{Paley-circ} & 1\\
15 &  5 &  0 & 2 &  3 &  5 & no  & \cite{ddd} & no &  &  0\\
16 &  7 &  0 & 3 &  8 &  2 & yes & \cite{ddd} & yes & Theorem \ref{comp} & 1\\
16 &  7 &  2 & 3 &  4 &  4 & yes & \cite{ddd} & no & Theorem \ref{thm-nonex} & 0\\
16 &  7 &  6 & 2 &  4 &  4 & no  & \cite{ddd} & no &  & 0\\
16 &  4 &  0 & 1 &  4 &  4 & yes & \cite{ddd} & yes & Theorem \ref{comp} & 6 \\
18 &  5 &  4 & 1 &  9 &  2 & no  & \cite{ddd} & no &  & 0\\
18 &  7 &  6 & 2 &  6 &  3 & no  & \cite{ddd} & no & & 0\\
18 &  6 &  0 & 2 &  6 &  3 & yes & \cite{ddd} & no & Theorem \ref{thm-nonex} & 0\\
18 &  8 &  4 & 3 &  3 &  6 & no  & \cite{ddd} & no & & 0\\
18 &  7 &  6 & 1 &  3 &  6 & yes & \cite{ddd} & yes & Theorem \ref{Paley-circ} & 2\\
18 &  4 &  0 & 1 &  3 &  6 & no  & \cite{ddd} & no & &  0\\
18 &  7 &  3 & 2 &  2 &  9 & no  & \cite{ddd} & no & & 0\\
20 &  9 &  0 & 4 & 10 &  2 & no  & \cite{ddd} & no & & 0\\
20 &  8 &  2 & 3 & 10 &  2 &  ?  &        -   & no& Theorem \ref{thm-nonex} & 0 \\
20 &  7 &  6 & 2 & 10 &  2 &  ?  &        -   & no & Theorem \ref{thm-nonex} &     0\\
20 &  5 &  2 & 1 & 10 &  2 &  ?  &        -   & no & Theorem \ref{thm-nonex} &     0\\
20 &  9 &  8 & 3 &  5 &  4 & no  & \cite{ddd} & no & & 0\\
20 &  9 &  3 & 4 &  4 &  5 &  ?  &        -   & no & Theorem \ref{thm-nonex} &    0\\
20 & 7 & 3 & 2 & 4 & 5 & yes & \cite{ddd} & yes & Theorem \ref{cyclic} & 4\\
20 &  6 &  0 & 2 &  4 &  5 & no  & \cite{ddd}  & no & & 0\\
20 &  8 &  4 & 2 &  2 & 10 & no  & \cite{ddd}  & no & &0\\
20 &  6 &  0 & 3 &  2 & 10 & no  & \cite{ddd}  & no & & 0\\
20 &  5 &  0 & 2 &  2 & 10 & no  & \cite{ddd}  & no & & 0\\
\hline\hline
\end{tabular} \end{center} }
\end{table}

\begin{table}[H] {\scriptsize
\caption{\footnotesize Feasible parameters of proper $DDDs$ with $21 \le v \le 25$, $0 < \lambda_2 < k$, $\lambda_1 < k$}
\label{parameters2}
\begin{center}
\begin{tabular}{r r r r r r | l l | l l r}
$v$ & $k$  & $\lambda_1$ & $\lambda_2$ & $m$ & $n$ & existence & reference 
& Cayley & reference & $\#$ DDCD
\\
\hline\hline
21 & 10 &  0 & 5 &  7 &  3 & no  & \cite{ddd} & no &  & 0\\
21 & 10 & 9 & 4 & 7 & 3 & yes & \cite{ddd} & yes & Theorem \ref{2Paley}, Theorem \ref{Paley-circ} & 1\\
21 &  9 &  0 & 4 &  7 &  3 & no  & \cite{ddd} & no &  & 0\\
21 &  8 &  1 & 3 &  7 &  3 &  ?  &        -   &  no&  & 0\\
21 &  7 &  3 & 2 &  7 &  3 & yes & \cite{ddd} & yes & Theorem \ref{Fano} & 1 \\
21 & 10 &  1 & 6 &  3 &  7 & no  & \cite{ddd} & no &  & 0\\
21 & 10 & 8 & 3 & 3 & 7 & yes & \cite{ddd} & yes & Theorem \ref{2Paley} & 1\\
21 &  9 &  5 & 3 &  3 &  7 & no  & \cite{ddd} & no & & 0\\
21 &  8 &  0 & 4 &  3 &  7 & no  & \cite{ddd} & no & & 0\\
21 &  8 &  7 & 1 &  3 &  7 & yes & \cite{ddd} & yes & Theorem \ref{Paley-circ} & 1 \\
21 &  7 &  0 & 3 &  3 &  7 & no  & \cite{ddd} & no & & 0\\
22 &  5 &  0 & 1 & 11 &  2 &  ?  &        -   & no & Theorem \ref{thm-nonex} &  0\\
22 &  9 &  5 & 2 &  2 & 11 & no  & \cite{ddd} & no & & 0\\
24 & 11 &  0 & 5 & 12 &  2 & yes & \cite{ddd} & yes & Theorem \ref{comp} & 1 \\
24 & 10 &  2 & 4 & 12 &  2 & yes & \cite{ddd} & no & Theorem \ref{thm-nonex} & 0\\
24 &  9 &  6 & 3 & 12 &  2 &  ?  &        -   & no & Theorem \ref{thm-nonex} &   0\\
24 & 10 &  3 & 4 &  8 &  3 &  ?  &        -   & no & Theorem \ref{thm-nonex} &    0\\
24 &  8 &  7 & 2 &  8 &  3 & no  & \cite{ddd} & no & & 0\\
24 &  7 &  0 & 2 &  8 &  3 & yes & Theorem \ref{comp} & yes & Theorem \ref{comp} & 6\\
24 & 11 & 10 & 4 &  6 &  4 &  ?  &        -   & no & Theorem \ref{thm-nonex} &  0\\
24 &  9 &  4 & 3 &  6 &  4 &  ?  &        -   & no & Theorem \ref{thm-nonex} &  0\\
24 &  5 &  0 & 1 &  6 &  4 & yes & \cite{ddd} & yes & Theorem \ref{comp} & 17\\
24 & 11 &  4 & 5 &  4 &  6 &  ?  &        -   & no & Theorem \ref{thm-nonex} &  0\\
24 & 10 &  0 & 5 &  4 &  6 & no  & \cite{ddd} & no &  & 0\\
24 &  9 &  0 & 4 &  4 &  6 & no  & \cite{ddd} & no &  & 0\\
24 &  8 &  4 & 2 &  4 &  6 & yes & Theorem \ref{comp} & yes & Theorem \ref{comp} & 8\\
24 & 11 &  2 & 6 &  3 &  8 & no  & \cite{ddd} & no &  & 0\\
24 & 10 &  6 & 3 &  3 &  8 &  ?  &        -   & no & Theorem \ref{thm-nonex} &  0\\
24 &  9 &  8 & 1 &  3 &  8 & yes & \cite{ddd} & yes & Theorem \ref{Paley-circ} & 2\\
24 &  6 &  2 & 1 &  3 &  8 & yes & Theorem \ref{comp} & yes & Theorem \ref{comp} &  14 \\
24 & 10 &  6 & 2 &  2 & 12 & no  & \cite{ddd} & no & & 0\\
24 &  9 &  0 & 6 &  2 & 12 & no  & \cite{ddd} & no &  & 0\\
24 &  8 &  4 & 1 &  2 & 12 & no  & \cite{ddd} & no &  &0\\
24 &  4 &  0 & 1 &  2 & 12 & no  & \cite{ddd} & no &  & 0\\
25 & 12 &  3 & 6 &  5 &  5 & no  & \cite{ddd} & no &  & 0\\
25 & 12 &  8 & 5 &  5 &  5 & no  & \cite{ddd} & no &  & 0\\
25 &  9 &  8 & 2 &  5 &  5 & no  & \cite{ddd} & no &  & 0\\
25 &  8 &  4 & 2 &  5 &  5 & no  & \cite{ddd} & no &  & 0\\
25 &  5 &  0 & 1 &  5 &  5 & yes & \cite{ddd} & yes & Theorem \ref{comp} & 1\\
\hline\hline
\end{tabular}\end{center}  }
\end{table}

\begin{table}[H] {\scriptsize
\caption{\footnotesize Feasible parameters of proper $DDDs$ with $26 \le v \le 27$, $0 < \lambda_2 < k$, $\lambda_1 < k$}
\label{parameters3}
\begin{center}
\begin{tabular}{r r r r r r | l l | l l r}
$v$ & $k$  & $\lambda_1$ & $\lambda_2$ & $m$ & $n$ & existence & reference 
& Cayley & reference & $\#$ DDCD
\\
\hline\hline
26 &  9 &  0 & 3 & 13 &  2 & yes & Theorem \ref{comp} & yes & Theorem \ref{comp} & 2 \\
26 & 11 &  7 & 2 &  2 & 13 & no  & \cite{ddd} & no & & 0\\
26 & 10 &  1 & 6 &  2 & 13 & no  & \cite{ddd} & no & & 0\\
27 & 12 &  6 & 5 &  9 &  3 &  ?  &        -   & no & Theorem \ref{thm-nonex} &  0\\
27 & 11 &  7 & 4 &  9 &  3 &  ?  &        -   & no & Theorem \ref{thm-nonex}  &   0 \\
27 & 10 &  9 & 3 &  9 &  3 & no  & \cite{ddd} & no & & 0\\
27 &  9 &  0 & 3 &  9 &  3 & yes & Theorem \ref{comp} & yes & Theorem \ref{comp} & 3\\
27 &  8 &  4 & 2 &  9 &  3 &  ?  &        -   & no & Theorem \ref{thm-nonex} &   0\\
27 &  6 &  3 & 1 &  9 &  3 & no  & \cite{ddd} & no &  &0\\
27 & 12 &  3 & 6 &  3 &  9 & no  & \cite{ddd} & no &  & 0\\
27 & 11 &  7 & 3 &  3 &  9 & no  & \cite{ddd} & no & & 0\\
27 & 10 &  0 & 5 &  3 &  9 & no  & \cite{ddd} & no &  & 0\\
27 & 10 &  9 & 1 &  3 &  9 & yes & \cite{ddd} & yes & Theorem \ref{Paley-circ} & 2\\
27 &  9 &  0 & 4 &  3 &  9 & no  & \cite{ddd} & no &  & 0\\
27 &  7 &  3 & 1 &  3 &  9 & no  & \cite{ddd} & no & & 0\\
\hline\hline
\end{tabular} \end{center} }
\end{table}

In Table \ref{DDCDs} we give parameters of divisible design Cayley digraphs $(v, k, \lambda_1, \lambda_2, m, n)$ with $v \le 27$, $0 < \lambda_2 < k$, $\lambda_1 < k$. 
Divisible design Cayley digraphs with parameters $(18,7,6,1,3,6)$,
$(21,10,9,4,7,3)$, $(21,7,3,2,7,3)$, $(21,10,8,3,3,7)$, $(21,8,7,1,3,7)$, $(24,11,0,5,12,2)$, $(24,9,8,1,3,8)$ and $(27,10,9,1,3,9)$ admit a regular action of more than one group, so each of them 
can be constructed as a Cayley graph using different groups.

\begin{table}[H] {\scriptsize
\caption{\footnotesize Proper divisible design Cayley digraphs with $v \le 27$, $0 < \lambda_2 < k$, $\lambda_1 < k$}
\label{DDCDs}
\begin{center}
\begin{tabular}{ c|c|c}
DDCD & regular groups acting on a DDCD  & $\#$ DDCD
\\
\hline\hline
(8,3,0,1,4,2) &  $Q_8$ & 1 \\
\hline
(9,4,3,1,3,3) & $E_9$ & 1 \\
(9,3,0,1,3,3) & $E_9$ & 1 \\
\hline
(12,5,1,2,4,3) & $Z_3:Z_4$ & 1 \\
							& $Z_{12}$ & 1  \\
(12,5,4,1,3,4) & $Z_{12}$ & 1 \\
\hline
(15,4,0,1,5,3) & $Z_{15}$ & 10 \\
(15,6,5,1,3,5) & $Z_{15}$ & 1 \\
\hline
(16,7,0,3,8,2) & $Q_{16}$ & 1 \\
(16,4,0,1,4,4) & $Q_{16}$ & 6 \\
\hline
(18,7,6,1,3,6) & $Z_{18}$ ($Z_6\times Z_3$, $Z_3\times S_3$) & 2  \\ 
\hline
(20,7,3,2,4,5) & $Z_5:Z_4$ & 1 \\
							& $Z_{20}$ & 3 \\
\hline
(21,10,9,4,7,3) & $Z_{21}$ ($Z_7:Z_3$) & 1 \\
(21,7,3,2,7,3) & $Z_{21}$ ($Z_7:Z_3$) & 1 \\
(21,10,8,3,3,7)  & $Z_{21}$ ($Z_7:Z_3$) & 1 \\
(21,8,7,1,3,7) & $Z_{21}$ ($Z_7:Z_3$) & 1 \\ 
\hline
(24,11,0,5,12,2) & $Z_3:Q_8$ ($SL(2,3)$) & 1  \\
(24,7,0,2,8,3) & $Z_{24}$ & 5 \\ 
							&  $Z_3 \times D_8$ & 1 \\
(24,5,0,1,6,4) & $Z_3: Z_8$ & 5 \\
						& $Z_{24}$ & 9 \\ 
						& $SL(2,3)$ & 3 \\
(24,8,4,2,4,6) & $Z_4\times S_3$ & 2 \\
						& $Z_2\times (Z_3:Z_4)$ & 3 \\
						& $Z_{12}\times Z_2$ & 3 \\
(24,9,8,1,3,8) & $Z_{24}$ ($SL(2,3)$, $Z_{12}\times Z_2$, $Z_3\times D_8$, $Z_3\times Q_8$)& 2  \\
(24,6,2,1,3,8) & $Z_{12}\times Z_2$ & 12 \\
							& $Z_3\times D_8$ & 2  \\
\hline
(25,5,0,1,5,5) &$E_{25}$ & 1  \\
\hline
(26,9,0,3,13,2) & $Z_{26}$ & 2   \\
\hline
(27,9,0,3,9,3) & $E_9:Z_3$ & 3  \\
(27,10,9,1,3,9) & $Z_{27}$ ($Z_9\times Z_3$, $E_9:Z_3$, $E_{27}$, $Z_9:Z_3$) & 2  \\
\hline\hline
\end{tabular} \end{center} }
\end{table}

\noindent {\bf Acknowledgement} \\
D. Crnkovi\' c and A. \v Svob were supported by {\rm C}roatian Science Foundation under the project 6732.
Hadi Kharaghani acknowledges the support of the Natural Sciences and Engineering Research Council of Canada (NSERC).
D. Crnkovi\' c thanks the University of Lethbridge for their hospitality.

\end{document}